\newtheorem{teo}{Theorem}
\newtheorem{lem}{Lemma}
\newtheorem{cor}{Corollary}
\newtheorem{prop}{Proposition}
\newtheorem{defi}{Definition}
\newtheorem{conjecture}{Conjecture}
\theoremstyle{definition}
\newtheorem{rem}{Remark}
\DeclareMathOperator{\cof}{cof}
\journal{Journal of \LaTeX\ Templates}
\begin{document}

\begin{frontmatter}

\title{{The determinant of the distance matrix of graphs with at most two cycles}}

\author[Sarmiento]{Ezequiel Dratman\fnref{anp}}
\ead{edratman@campus.ungs.edu.ar}

\author[Sarmiento]{Luciano N. Grippo\fnref{anp}}
\ead{lgrippo@campus.ungs.edu.ar}

\author[UNS]{Mart\'in {D.}~Safe\fnref{anp,uns,martin}}
\ead{msafe@uns.edu.ar}

\author[CEFET]{Celso~M.~da Silva Jr.}
\ead{celso.silva@cefet-rj.br}

\author[UFF]{Renata R.~Del-Vecchio\fnref{renata}}
\ead{rrdelvecchio@id.uff.br}

\address[Sarmiento]{CONICET - Instituto de Ciencias, Universidad Nacional de General Sarmiento, Los Polvorines, Argentina} 
  
\address[UNS]{{Departamento de Matem\'atica, Universidad Nacional del Sur (UNS), Bah\'ia Blanca, Argentina} and {INMABB, Universidad Nacional del Sur (UNS)-CONICET, Bah\'ia Blanca, Argentina}}
   
\address[CEFET]{DEMET and PPPRO, Centro Federal de Educa\c{c}\~ao Tecnol\' ogica Celso Suckow da Fonseca, Rio de Janeiro, Brazil} 

\address[UFF]{Departamento de An\'alise, Universidade Federal Fluminense, Niter\'oi, Brazil}

\fntext[anp]{{Work partially supported by ANPCyT PICT 2017-1315}}
\fntext[uns]{{Work partially supported by UNS {Grants} PGI 24/L103 {and PGI 24/L115}.}}
\fntext[martin]{Work partially supported {by} MATH-AmSud 18-MATH-01.}
\fntext[renata]{Work partially supported by CNPq Grants 408494/2016-6 and 305411/2016-0.}


\begin{abstract} 
Let $G$ be a connected graph on $n$ vertices and $D(G)$ its distance matrix. The formula for computing the determinant of this matrix in terms of the number of vertices is known when the graph is either a tree or {a} unicyclic graph.  
In this work we generalize these results, obtaining the determinant of the distance matrix for {all graphs} in a {class, including trees, unicyclic and bicyclic graphs. This class actually includes graphs with many cycles, provided that each block of the graph is at most bicyclic.}
\end{abstract}

\begin{keyword}
bicyclic graphs \sep determinant \sep distance matrix 
\MSC[2010]  15A15 \sep 15A24
\end{keyword}

\end{frontmatter}

\linenumbers

\section{Introduction}\label{intro}

A \emph{graph} $G=(V,E)$ consists of a set $V$ of \emph{vertices} and a set $E$ of \emph{edges}. We will consider graphs without multiple edges and without loops. Let $G$
be a connected graph on $n$ vertices with vertex set $V=\{v_1,\ldots,v_n\}$. The \emph{distance between vertices} $v_i$ and $v_j$, denoted $d(v_i,v_j)$, is the number of edges of {a} shortest path from $v_i$ to $v_j$. The \emph{distance matrix} of $G$, denoted $D(G)$, is the $n\times n$ symmetric matrix having its $(i,j)$-entry equal to $d(v_i,v_j)$. We also use $d_{i,j}$ to denote $d(v_i,v_j)$.

The distance matrix has been widely studied in the literature. The  interest in this matrix was motivated by  the connection with a communication problem  (see~\cite{graham-pollack-1971,graham-lovasz-1978} for more {details}). 
In an early article,~\cite{graham-pollack-1971}, Graham and Pollack presented a remarkable result, proving that the determinant of the distance matrix of a tree $T$ on $n$ vertices only depends on $n,$ being equal to ${(-1)^{n-1}(n-1)2^{n-2}}$. 
This result was generalized by Graham, Hoffman, and
Hosoya in 1977 \cite{graham-hoffman-hosoya-1977}, who proved that, for any graph $G$, the determinant of $D(G)$ depends only on the blocks of $G$.


In 2005, more than 30 years after the result of Graham and Pollack on trees, Bapat, Kirkland and Neumann~\cite{bapat-kirkland-neumann-2005} exhibited a formula for the determinant of the distance matrix of a unicyclic graph. Specifically, they proved that the determinant is zero when its only cycle has 
an even number of edges, {whereas} if the graph has $2k+1+m$ vertices and a cycle with $2k+1$ edges, the determinant is equal to $(-2)^m\left[k(k+1)+\frac{2k+1}{2}m\right]$.

{For a {bicyclic} graph,  the determinant can be easily computed in the case where the cycles have no common edges, since its blocks are  edges {and} cycles. In a conference article \cite{nosso}, we presented some advances for the remaining cases; i.e., when the cycles share at least one edge. Besides, we conjectured the formula for the remaining cases. {In} the present article, we completely solve these conjectures, extending the formula of the determinant of $D(G)$ to graphs $G$ having bicyclic blocks as well as trees and unicyclic blocks.}

This paper is organized as follows. In {Section}~\ref{sec: preliminaries} we present some {basic notations, preliminary results, and} we briefly describe previous results in connection with the determinant of the distance matrix of a bicyclic graph.  In {Sections}~\ref{sec: theta}  we consider the determinant of the distance matrix of a $\theta$-graph, a $\theta$-graph plus a pendant vertex and a $\theta$-graph attached to a path, where the definition of a $\theta$-graph is stated in {Section}~\ref{sec: preliminaries}. In the last theorem, we present a formula for the determinant of a graph arised from a tree by the addition of at most two edges (graphs at most bicyclic). 

\section{Definitions and preliminary results}\label{sec: preliminaries}

A \emph{tree} is a connected acyclic graph. A \emph{unicyclic graph} is a connected graph with as many edges as vertices.
The path and the cycle on $n$ vertices are denoted by $C_n$ and $P_n$, respectively. 

The determinant and the cofactor of the distance matrix of a cycle are known and they are given  in the lemma below. We remember that the cofactor  for any square matrix $A$, denoted by $\cof (A),$ is the sum of the cofactors of $A.$

\begin{lem}[\cite{bapat-kirkland-neumann-2005, cof}] \label{1}
For each $n \geq 3$:
\begin{itemize}
\item if $n$ is odd, $\det D(C_n) = (n^2 - 1)/4$ and $\cof D(C_n) = n;$
\item if $n$ is even, $\det D(C_n) = 0$ and $\cof D(C_n) = 0.$
\end{itemize}
\end{lem}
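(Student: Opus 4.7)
The plan is to exploit the fact that $D(C_n)$ is a circulant matrix. Labeling the vertices cyclically as $0,1,\ldots,n-1$, the first row of $D(C_n)$ is $(c_0,c_1,\ldots,c_{n-1})$ with $c_j=\min(j,n-j)$, so its eigenvalues are $\lambda_k=\sum_{j=0}^{n-1}c_j\omega^{jk}$ for $k=0,1,\ldots,n-1$ with $\omega=e^{2\pi i/n}$, and the all-ones vector $\mathbf{1}$ is the eigenvector associated with the row sum $\lambda_0$. A direct calculation gives $\lambda_0=(n^2-1)/4$ when $n$ is odd and $\lambda_0=n^2/4$ when $n$ is even.

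For $n=2m$ even, I would use the pairing identity $c_j+c_{j+m}=m$ (valid for $0\le j<m$) to simplify $\lambda_k$, for any \emph{even} index $k$, to the geometric sum $m\sum_{j=0}^{m-1}\omega^{jk}$, which vanishes whenever $\omega^k\neq 1$. This exhibits $m-1\ge 1$ zero eigenvalues, so $\det D(C_n)=0$. For the cofactor, the key observation is that $D(C_n)$ is symmetric and $\mathbf{1}$ is an eigenvector for the nonzero eigenvalue $\lambda_0$, hence $\mathbf{1}$ is orthogonal to $\ker D(C_n)$. Since $D(C_n)\cdot\operatorname{adj}(D(C_n))=\det D(C_n)\cdot I=0$, every column of the adjugate lies in $\ker D(C_n)$, and therefore $\cof D(C_n)=\mathbf{1}^{T}\operatorname{adj}(D(C_n))\mathbf{1}=0$.

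For $n=2m+1$ odd, the palindromic symmetry $c_j=c_{n-j}=j$ (for $1\le j\le m$) gives $\lambda_k=\sum_{j=1}^{m}j(\zeta^{j}+\zeta^{-j})$ with $\zeta=\omega^k$. Evaluating this closed form and using $\zeta^n=1$ to rewrite $\zeta^{m+1}+\zeta^m-2=(\zeta^m-1)^2/\zeta^m$, one obtains the compact expression $\lambda_k=-S_k^{2}/\omega^{k(m-1)}$, where $S_k=1+\omega^k+\cdots+\omega^{k(m-1)}=(\omega^{km}-1)/(\omega^k-1)$. Multiplying over $k=1,\ldots,n-1$ and invoking (i) $\prod_{k=1}^{n-1}(\omega^k-1)=n$, (ii) the fact that $k\mapsto mk\pmod n$ is a bijection of $\{1,\ldots,n-1\}$ (since $\gcd(m,n)=1$), and (iii) $\prod_{k=1}^{n-1}\omega^{k(m-1)}=1$, one finds $\prod_{k=1}^{n-1}S_k=1$ and hence $\prod_{k=1}^{n-1}\lambda_k=1$. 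Thus $\det D(C_n)=\lambda_0=(n^2-1)/4$. The cofactor then follows from $D(C_n)\mathbf{1}=\lambda_0\mathbf{1}$: indeed $\mathbf{1}^{T}D(C_n)^{-1}\mathbf{1}=n/\lambda_0$, so $\cof D(C_n)=\det D(C_n)\cdot n/\lambda_0=n$.

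The main obstacle is the bookkeeping in the odd case, i.e., deriving the compact form of $\lambda_k$ and verifying that the product over $k\neq 0$ collapses to~$1$. The even case, by contrast, is essentially immediate once the circulant eigenvalue formula and the orthogonality $\mathbf{1}\perp\ker D(C_n)$ are in hand.
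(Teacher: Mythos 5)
Your argument is correct, but note that the paper itself gives no proof of this lemma: it is imported verbatim from the cited references, and the only related machinery the paper actually uses is the explicit inverse $D(C_{2k+1})^{-1}=-2I-C^{k}-C^{k+1}+\tfrac{2k+1}{k(k+1)}J$ taken from Bapat--Kirkland--Neumann. Your route is therefore genuinely different: you diagonalize the circulant $D(C_n)$ and compute $\det$ and $\cof$ spectrally. I checked the key points and they hold: $\lambda_0=(n^2-1)/4$ resp.\ $n^2/4$; in the even case $c_j+c_{j+m}=m$ does force $\lambda_k=0$ for the $m-1\ge 1$ even indices $k\neq 0$, and the orthogonality of $\mathbf{1}$ to $\ker D(C_n)$ (symmetric matrix, $\mathbf{1}$ an eigenvector for a nonzero eigenvalue) correctly kills $\cof D(C_n)=\mathbf{1}^{T}\operatorname{adj}(D(C_n))\mathbf{1}$ whether $\operatorname{adj}$ has rank $1$ or is zero; in the odd case the identity $\lambda_k=-S_k^{2}/\omega^{k(m-1)}$ checks out on small cases and the product collapses to $1$ exactly as you say, using $\gcd(m,2m+1)=1$ and $\sum_{k=1}^{n-1}k(m-1)\equiv 0\pmod n$ for odd $n$. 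What your approach buys is a self-contained, purely spectral derivation of both quantities at once, including the fact that the product of the nonzero-index eigenvalues is exactly $1$ (a cleaner statement than one usually sees); what the reference's approach buys is the explicit inverse of $D(C_{2k+1})$, which the rest of this paper needs anyway for equations \eqref{eq inv C_{2k+1}}--\eqref{eq vt C(2k+1)^{-1} v}, so your proof would not replace that input. The only presentational gap is that the derivation of the compact form of $\lambda_k$ in the odd case is asserted rather than written out; since that is the step carrying all the bookkeeping, it should be displayed in full in a final write-up.
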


In \cite{bapat-kirkland-neumann-2005} the {determinant} of $D(G)$ was {obtained} when $G$ is a unicyclic graph.

\begin{teo}[\cite{bapat-kirkland-neumann-2005}]
Let $G$ be a unicyclic graph consisting of a cycle of length $l$ plus $m$ edges outside the cycle. If $l$ is even, then $\det D(G) = 0;$ otherwise:
$$\det D(G) = (-2)^m\frac{l^2 + 2ml - 1}{4}$$
\end{teo}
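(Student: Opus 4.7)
The plan is to deduce the theorem from two ingredients already at hand in the excerpt: Lemma~\ref{1}, which records $\det D(C_l)$ and $\cof D(C_l)$, together with the Graham--Hoffman--Hosoya block-decomposition identity cited in the introduction. Recall that this identity asserts, for a connected graph whose blocks are $B_1,\dots,B_r$,
$$\det D(G)=\sum_{i=1}^{r}\det D(B_i)\prod_{j\neq i}\cof D(B_j), \qquad \cof D(G)=\prod_{i=1}^{r}\cof D(B_i).$$
The first step is therefore to describe the blocks of a unicyclic graph: since $G$ is unicyclic with $m$ edges outside its unique cycle $C_l$, every such edge is a bridge, so the block decomposition of $G$ consists of exactly one copy of $C_l$ together with $m$ copies of $K_2$.

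The second step is to assemble the block-level data. Lemma~\ref{1} supplies the two relevant values for the cyclic block, while a direct $2\times 2$ calculation yields $\det D(K_2)=-1$ and $\cof D(K_2)=-2$. Because all but one of the blocks are $K_2$'s, the Graham--Hoffman--Hosoya sum collapses to just two nonzero types of term and becomes
$$\det D(G)=\det D(C_l)\,(\cof D(K_2))^{m}+m\,\det D(K_2)\,\cof D(C_l)\,(\cof D(K_2))^{m-1},$$
that is, $\det D(G)=\det D(C_l)\,(-2)^{m}-m\,\cof D(C_l)\,(-2)^{m-1}$.

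The third and final step is to split on the parity of $l$. If $l$ is even, both $\det D(C_l)$ and $\cof D(C_l)$ vanish by Lemma~\ref{1}, so the whole expression is zero and the first claim holds. If $l$ is odd, I would substitute $\det D(C_l)=(l^2-1)/4$ and $\cof D(C_l)=l$, factor out $(-2)^{m-1}$, and simplify to reach $(-2)^{m}(l^2+2ml-1)/4$; this is a short, elementary algebraic manipulation.

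I do not foresee any genuine obstacle in this argument: the only delicate point is sign bookkeeping. In particular one must remember that $\cof D(K_2)=-2$ rather than $+2$, and it is precisely this negative sign that produces the factor $(-2)^{m}$ in the final formula instead of a bare $2^{m}$. Once the Graham--Hoffman--Hosoya identity is invoked and the signs are correctly tracked, no further graph-theoretic input is needed beyond Lemma~\ref{1}.
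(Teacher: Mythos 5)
Your argument is correct: the block decomposition of a unicyclic graph is indeed one copy of $C_l$ plus $m$ copies of $K_2$, the values $\det D(K_2)=-1$ and $\cof D(K_2)=-2$ are right, and substituting Lemma~\ref{1} into the Graham--Hoffman--Hosoya identity of Theorem~\ref{2} does collapse to $(-2)^m(l^2+2ml-1)/4$ in the odd case and to $0$ in the even case. Note, however, that the paper itself offers no proof of this statement --- it is quoted verbatim from Bapat, Kirkland and Neumann --- so there is no ``paper proof'' to match against; what you have written is, in effect, exactly the $c=1$ instance of the derivation the paper carries out immediately afterwards for its cactus corollary, which it likewise obtains ``as an immediate consequence of Lemma~\ref{1} and Theorem~\ref{2}.'' So your route is fully consistent with the paper's own machinery rather than genuinely different from it. The only point worth making explicit is non-circularity: Lemma~\ref{1} (the $m=0$ case) and Theorem~\ref{2} are established independently of the unicyclic formula, so deducing the latter from the former two is legitimate, whereas the original 2005 proof proceeds by a direct computation with the distance matrix of the cycle (via its inverse), not via the block identity.
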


 A \emph{cut-vertex} of a connected graph is a vertex whose removal disconnects the graph. A \emph{block of a graph} G is a maximal connected subgraph of G
having no cut-vertices.
A \emph{block} is a connected graph having no cut-vertices.

 In \cite{graham-hoffman-hosoya-1977} it was proved that if the blocks of
a graph $G$ are $G_1,G_2,\hdots,G_k,$ then $\det D(G)$ depends only on the $\det D(G_1), \det D(G_2 ),\hdots, \det D(G_k)$ and
$\cof D(G_1), \cof D(G_2 ),\hdots, \cof D(G_k).$

\begin{teo}[\cite{graham-hoffman-hosoya-1977}] \label{2}
If G is a connected graph whose blocks are $G_1, G_2 , \hdots , G_k,$  then 
$$ \det D(G)= \sum\limits_{i=1}^k \det D(G_i) \prod \limits_{j \in \{ 1,2,\hdots,k\}-\{i\}}\cof D(G_j)$$
and
$$\cof D(G) = \prod\limits_{i=1}^{k}\cof D(G_i).$$
\end{teo}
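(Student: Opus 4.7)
The plan is to establish both identities simultaneously by induction on $k$. The base case $k=1$ is tautological, since the sum on the right contains the single term $\det D(G_1)$ (the product is empty) and $\cof D(G_1)$ is a product over a singleton. For $k\ge 2$, the block--cut-vertex tree of $G$ is indeed a tree and so has a leaf; relabelling, there is a block $G_k$ meeting the remainder of $G$ at a single cut-vertex $v$. Let $H$ be the subgraph of $G$ induced by $V(G)\setminus(V(G_k)\setminus\{v\})$. Then $H$ is connected with blocks $G_1,\ldots,G_{k-1}$, and $V(G_k)\cap V(H)=\{v\}$. The crucial geometric input is that, for every $u\in V(G_k)\setminus\{v\}$ and $w\in V(H)\setminus\{v\}$, any $u$--$w$ path in $G$ passes through $v$, so
$$d_G(u,w)=d_{G_k}(u,v)+d_H(v,w).$$
Ordering the vertices of $G$ as $V(G_k)\setminus\{v\}$, then $v$, then $V(H)\setminus\{v\}$, and letting $b,c$ denote the column vectors of distances to $v$ in $G_k$ and $H$ respectively, distance-additivity forces the off-diagonal block of $D(G)$ to be the rank-two matrix $b\mathbf{1}^T+\mathbf{1}c^T$.

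I would then reduce the $k$-block case to a two-block identity by eliminating this cross block. A convenient bookkeeping device is the bordered matrix
$$\widetilde D(G)=\begin{pmatrix}0 & \mathbf{1}^T \\ \mathbf{1} & D(G)\end{pmatrix},$$
which satisfies $\det\widetilde D(G)=-\cof D(G)$ (Laplace expansion along the first row). Subtracting the $v$-row from each row of $V(G_k)\setminus\{v\}$ and the $v$-column from each column of $V(H)\setminus\{v\}$ annihilates the cross block, while the $v$-row and $v$-column that remain assemble precisely into the $\mathbf{1}$-borders of the smaller bordered matrices attached to $G_k$ and $H$. Laplace expansion along the $v$-line then yields the two-block identities
$$\det D(G)=\det D(G_k)\,\cof D(H)+\cof D(G_k)\,\det D(H),\qquad \cof D(G)=\cof D(G_k)\,\cof D(H);$$
equivalently, the pair $(\cof D,\det D)$ is multiplicative over block-gluing when read in the dual-number ring $\mathbb{R}[\varepsilon]/(\varepsilon^2)$. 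Applying the induction hypothesis to $H$ expands $\det D(H)$ and $\cof D(H)$ in terms of $G_1,\ldots,G_{k-1}$, and substitution telescopes the right-hand side into $\sum_{i=1}^{k}\det D(G_i)\prod_{j\neq i}\cof D(G_j)$, while the product identity immediately gives $\cof D(G)=\prod_{i=1}^{k}\cof D(G_i)$.

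The main obstacle I foresee is proving the two-block identity itself: the sum-of-cofactors $\cof D$ is not a standard matrix invariant, and tracking it through row and column manipulations is delicate. Its natural home is exactly the $\mathbf{1}$-border of $\widetilde D$, and distance-additivity is tailor-made to make the cross block vanish under the reduction above; the bordered-matrix framework is what keeps the bookkeeping tractable and lets both identities drop out of a single Schur-complement-style expansion.
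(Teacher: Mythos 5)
The paper does not prove this theorem; it is quoted from Graham, Hoffman and Hosoya, so your argument can only be measured against the standard one from the literature --- which, in outline, it is: induction over the block--cut-vertex tree, reduction to a two-block identity at a leaf block, and the bordered matrix $\widetilde D(G)$ with $\det\widetilde D(G)=-\cof D(G)$ as the device that carries the non-standard invariant $\cof$ through the linear algebra. The distance-additivity $d_G(u,w)=d_{G_k}(u,v)+d_H(v,w)$ and the resulting rank-two cross block $b\mathbf{1}^T+\mathbf{1}c^T$ are correct and are exactly the right geometric input; the telescoping of the two-block identity into the $k$-block formula is routine.

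There is, however, a concrete flaw in the reduction as you describe it. Your operations are asymmetric: subtracting the $v$-row only from the rows indexed by $V(G_k)\setminus\{v\}$ and the $v$-column only from the columns indexed by $V(H)\setminus\{v\}$ annihilates the $\bigl(V(G_k)\setminus\{v\}\bigr)\times\bigl(V(H)\setminus\{v\}\bigr)$ block, but leaves the transposed cross block untouched --- its entries remain $b_u+c_w$ --- so $\widetilde D(G)$ does not decouple and the claimed Laplace expansion along the $v$-line does not go through as stated. The repair is to subtract the $v$-row from \emph{every} other row of $\widetilde D(G)$ and the $v$-column from \emph{every} other column: then both cross blocks vanish, the $\mathbf{1}$-borders collapse to $e_v$ and $e_v^T$, and expanding along the border row and border column leaves $\pm\det\hat A\cdot\det\hat B$ with $\hat A_{uu'}=d(u,u')-d(u,v)-d(v,u')$; applying the same operations to $\widetilde D(G_k)$ and $\widetilde D(H)$ separately identifies $\det\hat A$ and $\det\hat B$ with $\mp\cof D(G_k)$ and $\mp\cof D(H)$, giving the cofactor identity. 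To extract the determinant identity from the same computation you must border with a variable corner entry, using $\det$ of the matrix obtained from $\widetilde D$ by replacing its zero corner with $x$, which equals $x\det D-\cof D$, and compare coefficients of $x$; your dual-number remark about $\mathbb{R}[\varepsilon]/(\varepsilon^2)$ is precisely the right packaging of this step, but it must be made an explicit part of the expansion rather than an aside. With these two repairs the induction closes and the argument is complete.
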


A {\emph{cactus}} is a graph in which {each} two cycles have at most one vertex in common. By definition, every unicyclic graph is a cactus. Moreover, each block of a cactus on at least two vertices is either an edge or a
cycle. As $\det D(G)$ depends only on the blocks of $G$ and $\det D$ and $\cof D$ are known for an edge and for the cycles, we obtain the next corollary as an immediate consequence of Lemma \ref{1} and Theorem \ref{2}.

\begin{cor}
Let $G$ be a connected cactus having precisely $c$ cycles whose
lengths are $l_1,l_2,\hdots,l_c$ plus $m$ other edges outside these cycles.
\begin{itemize}
\item If some of $l_1,l_2,\hdots,l_c$ is even, then $\det D(G) = 0.$
\item Otherwise (i.e., if all of $l_1,l_2,\hdots,l_c$ are odd),
$$ \det D(G)=(-2)^m\left(\prod\limits_{i=1}^{c}l_i\right)\left(\frac{m}{2}+\sum\limits_{i=1}^{c}\frac{l_i^2-1}{4l_i}\right).$$
\end{itemize}
\end{cor}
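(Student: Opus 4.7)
The plan is to invoke Theorem~\ref{2} directly, exploiting the very restricted block structure of a cactus: by the discussion immediately preceding the statement, every block on at least two vertices is either the edge $K_2$ or a cycle $C_{l_i}$. Before starting, I would collect the four pieces of block data that the formula in Theorem~\ref{2} needs. Lemma~\ref{1} supplies $\det D(C_n)$ and $\cof D(C_n)$ for every cycle, and a one-line direct calculation on the $2\times 2$ distance matrix of $K_2$ gives $\det D(K_2)=-1$ and $\cof D(K_2)=-2$.

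For the first (even) case, suppose some $l_j$ is even. Lemma~\ref{1} then yields $\det D(C_{l_j})=\cof D(C_{l_j})=0$. In the sum of Theorem~\ref{2} expressing $\det D(G)$, the single term indexed by $C_{l_j}$ has $\det D(C_{l_j})=0$ as a factor, while every other term has $\cof D(C_{l_j})=0$ appearing inside its product. All terms thus vanish and $\det D(G)=0$.

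For the second (odd) case, I would substitute $\det D(C_{l_i})=(l_i^2-1)/4$, $\cof D(C_{l_i})=l_i$, $\det D(K_2)=-1$, and $\cof D(K_2)=-2$ into Theorem~\ref{2}. The $m$ edge blocks together contribute $m\cdot(-1)\cdot\bigl(\prod_{k=1}^{c}l_k\bigr)\cdot(-2)^{m-1}$, while the $c$ cycle blocks contribute $\sum_{i=1}^{c}\frac{l_i^2-1}{4}\bigl(\prod_{k\neq i}l_k\bigr)(-2)^m$. Factoring out the common $(-2)^m\prod_{k=1}^{c}l_k$ and using $m\cdot(-1)\cdot(-2)^{m-1}=\tfrac{m}{2}\cdot(-2)^m$ in the edge contribution delivers exactly the stated expression. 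No real obstacle arises; the only care needed is the clean separation between ``distinguished'' cycle terms and ``distinguished'' edge terms in the sum of Theorem~\ref{2}, together with the small sign manipulation on the $(-2)^{m-1}$ factor produced by the edges.
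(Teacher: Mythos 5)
Your proposal is correct and follows exactly the route the paper intends: the corollary is stated there as an immediate consequence of Lemma~\ref{1} and Theorem~\ref{2}, and your computation (including $\det D(K_2)=-1$, $\cof D(K_2)=-2$, and the rewriting $m(-1)(-2)^{m-1}=\tfrac{m}{2}(-2)^m$) is precisely the omitted verification. Nothing to add.
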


A \emph{bicyclic graph} is a graph obtained by adding an edge to a unicyclic graph. The special case of $c = 2$ in  the  formula  of the above corollary was also obtained in \cite{GoZhXu2013} by alternative means, corresponding to a special class of bicyclic graphs.

As $\det D$ for all cacti is known, in order to find $\det D$ for all bicyclic
graphs, it is enough to find $\det D $ and $\cof D$ for bicyclic blocks.

\begin{defi}
Let $P_{l+1},P_{p+1}, P_{q+1}$ be three vertex disjoint paths, $l \geq 1$ and $p,q \geq 2,$ each of them having endpoints, $v^l_1, v^l_2, v^p_1, v^p_2, v^q_1, v^q_2$, respectively.  We denote by $\theta(l,p,q)$-graph, or simply $\theta$-graph,  the graph obtained by identifying the vertices $v^l_1, v^p_1, v^q_1 $ as one vertex, and proceeding in the same way for $v^l_2, v^p_2, v^q_2$.
\end{defi}



{Note that $\theta(l,p,q)$-graph is a bicyclic graph, with no pendant edge, whose cycles share at least one edge. In \cite{nosso}, we proved the following results:}

\begin{prop}[{\cite[Lemma 3.1]{nosso}}]
For every positive integer $k$, $$\det D(\theta(2,2, 2k+1)) = 4(k^2+k-1).$$ 
\end{prop}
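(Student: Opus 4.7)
The plan is to exploit the automorphism of $G := \theta(2, 2, 2k+1)$ that swaps the two internal vertices of the length-$2$ paths. Label these vertices $a, b$, the two degree-$3$ vertices $u, v$, and the internal vertices of the length-$(2k+1)$ path $z_1, \ldots, z_{2k}$ (with $z_1 \sim u$ and $z_{2k} \sim v$); order the rows and columns of $D := D(G)$ as $a, b, u, v, z_1, \ldots, z_{2k}$.

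First I would observe that, because $a \leftrightarrow b$ is an automorphism fixing every other vertex, rows $R_a$ and $R_b$ of $D$ agree in every entry outside of the $2 \times 2$ block they span. Hence $R_a - R_b$ is a row with only two nonzero entries: $-2$ at column $a$ and $+2$ at column $b$. Applying the row operation $R_a \to R_a - R_b$ and the analogous column operation $C_a \to C_a - C_b$ leaves $\det D$ unchanged and produces a matrix whose first row and first column are each supported on their first two entries. Expanding by cofactors first along this sparse row, and then along the sparse first column of the surviving minor, reduces the computation to
\begin{equation*}
\det D(G) = -4\bigl[\det D(G-a) + \det B\bigr],
\end{equation*}
where $G - a \cong C_{2k+3}$ and $B$ is the principal submatrix of $D$ indexed by $\{u, v, z_1, \ldots, z_{2k}\}$. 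Lemma~\ref{1} then gives $\det D(G - a) = ((2k+3)^2 - 1)/4 = (k+1)(k+2)$.

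For the second piece, the key observation is that the distances in $G$ among the vertices $\{u, v, z_1, \ldots, z_{2k}\}$ coincide with their distances along the cycle $u \sim a \sim v \sim z_{2k} \sim \cdots \sim z_1 \sim u$ of length $2k+3$ (any shortest path uses at most one of the two length-$2$ ears). Therefore $B$ is itself a principal submatrix of $D(C_{2k+3})$, with Toeplitz entries $B_{ij} = \min(|i-j|, 2k+3 - |i-j|)$. Applying the second-difference row operations $R_j \to R_{j-1} - 2R_j + R_{j+1}$ exploits the piecewise-linear shape of the cycle distance: in each transformed inner row only the diagonal entry and at most four ``fold'' entries survive. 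The resulting nearly banded determinant admits a clean evaluation, most naturally by induction on $k$ or by using the reflective symmetry $i \leftrightarrow 2k+1-i$ of $B$ to split it into two smaller blocks; in either case one obtains $\det B = -(2k^2 + 4k + 1)$. Putting the two computations together,
\begin{equation*}
\det D(G) = -4\bigl[(k+1)(k+2) - (2k^2 + 4k + 1)\bigr] = 4(k^2 + k - 1).
\end{equation*}

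The genuine obstacle is the closed-form evaluation of $\det B$: although the formula is easy to conjecture from small cases, turning the second-difference reduction into a uniform argument requires either the careful handling of the boundary rows (those for which ``fold'' entries fall outside the index range) or the slightly more ingenious persymmetric decomposition just mentioned. Once $\det B$ is settled, the remaining algebra is trivial.
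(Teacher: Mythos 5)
You should first note that the paper does not actually prove this proposition: it imports it verbatim from the conference version \cite{nosso} (Lemma 3.1 there), so there is no in-paper argument to match. Judged on its own terms, your outline is sound and every intermediate constant checks out: the rows of $a$ and $b$ do agree off their $2\times 2$ block, the symmetrized expansion does yield $\det D(G) = -4\bigl[\det D(C_{2k+3}) + \det B\bigr]$ with $\det D(C_{2k+3}) = (k+1)(k+2)$ by Lemma~\ref{1}, the identification of $B$ as the principal submatrix of $D(C_{2k+3})$ obtained by deleting the row and column of $a$ is correct, and the target value $\det B = -(2k^2+4k+1)$ is the right one. The genuine gap is exactly the one you flag yourself: the evaluation of $\det B$ is never carried out. ``Induction on $k$'' and ``persymmetric decomposition'' are named but not executed, and the second-difference reduction as described does not obviously close, since the boundary rows (where the fold entries leave the index range) are where all the content of the computation lives. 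As written, the proof establishes a correct reduction but proves nothing about the value of $\det B$, so the claimed identity is not yet derived.

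The good news is that the gap closes in one line using machinery already in this paper. Since $B$ is $D(C_{2k+3})$ with the row and column of a single vertex deleted, $\det B$ is a diagonal cofactor of $D(C_{2k+3})$, i.e.
\begin{equation*}
\det B \;=\; \det D(C_{2k+3})\cdot\bigl(D(C_{2k+3})^{-1}\bigr)_{aa},
\end{equation*}
and by the Bapat--Kirkland--Neumann inverse formula \eqref{eq inv C_{2k+1}} (with $k$ replaced by $k+1$) every diagonal entry of $D(C_{2k+3})^{-1}$ equals $-2 + \tfrac{2k+3}{(k+1)(k+2)}$. Hence
\begin{equation*}
\det B \;=\; (k+1)(k+2)\left(-2 + \frac{2k+3}{(k+1)(k+2)}\right) \;=\; -2(k+1)(k+2) + 2k + 3 \;=\; -(2k^2+4k+1),
\end{equation*}
after which your final algebra gives $\det D(G) = -4\bigl[(k+1)(k+2) - (2k^2+4k+1)\bigr] = 4(k^2+k-1)$ as claimed. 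I recommend replacing the banded-determinant discussion with this cofactor argument; it also harmonizes with the technique used in the proof of Theorem~\ref{theta}, which leans on the same inverse formula.
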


\begin{prop}[{\cite[Lemma 3.2]{nosso}}]\label{lem: theta}
Let $G$ be one of the graphs bellow: 
\begin{itemize}
 \item $\theta(1,2k-1,2k-1),$ for $k \ge 2;$
 \item $\theta(2,2,2k-2),$ for $k\ge3;$
 \item $\theta(l,p,q),$ for {$l \geq 2,$ $p\geq 3$, and $q\geq 3$}.
\end{itemize}
Then, $\det{{D}}(G)= 0.$ 
\end{prop}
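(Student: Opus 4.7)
I would exhibit, for each graph in the lemma, a non-trivial kernel vector of $D(G)$; by symmetry of $D(G)$, this is equivalent to establishing a four-term row identity of the form $R_A-R_B=R_C-R_D$, where $R_z$ denotes the row of $D(G)$ indexed by $z$. Number the vertices on each of the three internally disjoint paths by their position from $u$, writing them as $u=w_0,w_1,\dots,w_l=v$, $u=x_0,x_1,\dots,x_p=v$, $u=y_0,y_1,\dots,y_q=v$, and set $\delta(z):=d(u,z)-d(v,z)$, so that $|\delta(z)|\le l$. Two distance rules suffice: for an interior vertex $z$ at position $i$ on a path of length $r$ and a target $w$ off that path, $d(z,w)=\min\bigl(i+d(u,w),\,(r-i)+d(v,w)\bigr)$; for $w$ on the same path as $z$, $d(z,w)$ is the direct path distance (the regime that always applies in what follows). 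From these one extracts the organising identity: for $i\in\{1,\dots,\lfloor(p-1)/2\rfloor\}$ and any target $w$ off the $x$-path,
\[
R_{x_i}(w)-R_{x_{p-i}}(w)\;=\;\mathrm{clip}\bigl(\delta(w),\,[-m,m]\bigr),\qquad m:=p-2i,
\]
and similarly on the $l$- and $q$-paths; the vector $R_u-R_v$ equals the unclipped $\delta(\cdot)$. Two such symmetric pairs (or one pair together with $R_u-R_v$) agreeing at every vertex therefore produce a kernel vector.

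\textbf{Case $\theta(1,2k-1,2k-1)$.} The two long paths are interchanged by an automorphism fixing $u$ and $v$. Their middle pairs $(x_{k-1},x_k)$ and $(y_{k-1},y_k)$ both have parameter $m=1$, so both differences equal $\mathrm{sign}(\delta(\cdot))$, and the kernel vector is $e_{x_{k-1}}-e_{x_k}-e_{y_{k-1}}+e_{y_k}$.

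\textbf{Case $\theta(2,2,2k-2)$.} Write $q:=2k-2$. The only non-trivial symmetric pair on the long path is $(y_{q/2-1},y_{q/2+1})$, with $m=2$. Since $l=2$ forces $|\delta(\cdot)|\le 2$, both $R_u-R_v=\delta(\cdot)$ and $R_{y_{q/2-1}}-R_{y_{q/2+1}}=\mathrm{clip}(\delta(\cdot),[-2,2])$ coincide on every vertex; the kernel vector is $e_u-e_v-e_{y_{q/2-1}}+e_{y_{q/2+1}}$.

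\textbf{Case $\theta(l,p,q)$ with $l\ge 2$, $p,q\ge 3$.} If $p\equiv q\pmod 2$, pick interior indices $i,j$ with $p-2i=q-2j=:m\ge 1$ (the middle pairs give $m=1$ when $p,q$ are odd, and pairs with $m=2$ work when both are even) and match the symmetric pairs on the $p$- and $q$-paths. If $p\not\equiv q\pmod 2$, then by pigeonhole $l$ has the same parity as one of $p,q$, allowing an analogous match between an $l$-path symmetric pair and one on the like-parity path; alternatively, $R_u-R_v$ itself may be matched with a pair $(y_j,y_{q-j})$ whose parameter $m=q-2j$ satisfies $m\ge l$, so that $\mathrm{clip}(\delta,[-m,m])=\delta$. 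The principal obstacle is this case analysis: in every parity configuration allowed by the hypotheses $l\ge 2,\,p,q\ge 3$, one must locate a valid matching pair, and then verify the resulting identity not only off the two active paths (where the $\min$-formula applies cleanly) but also at the vertices on them, where the piecewise behaviour of the direct-distance formula has to be checked to align with the clipped $\delta$ on both sides.
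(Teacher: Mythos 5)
The paper does not actually prove this proposition: it is imported verbatim as \cite[Lemma~3.2]{nosso}, so there is no in-paper argument to compare yours against, and your proposal must stand on its own. On its own terms, the strategy (exhibit a kernel vector $e_A-e_B-e_C+e_D$ via the clipped-difference identity) is sound, and your first two cases are correct and complete once the on-path targets are checked: for a symmetric pair $(y_j,y_{q-j})$ and a target $y_t$ on the same path one gets $d(y_j,y_t)-d(y_{q-j},y_t)=\mathrm{clip}(2t-q,[-m,m])$, while $\delta(y_t)=\mathrm{clip}(2t-q,[-d'',d''])$ with $d''$ the $u$--$v$ distance avoiding that path; with $m=1$, $d(u,v)=1$ in the first case and $m=2$, $d''=\min(l,p)=2$ in the second, everything matches.

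The genuine gap is in the third family, which is the bulk of the statement. First, the justification of your fallback is wrong: matching $R_u-R_v$ against $(y_j,y_{q-j})$ under the condition $m=q-2j\ge l$ only forces agreement off the $y$-path. At an on-path target $y_t$ with $t\le j$ the pair contributes $-m$, whereas $R_u(y_t)-R_v(y_t)=\delta(y_t)=-\min(l,p)$; so the correct requirement is the equality $m=\min(l,p)$, together with the parity condition $q\equiv\min(l,p)\pmod 2$ needed for $j=(q-m)/2$ to be an integer, not the inequality $m\ge l$. Second, the pigeonhole route collapses exactly where it is needed: when $l=2$ a path of length $2$ has no interior symmetric pair with $m\ge 1$, so for $l=2$ and $p\not\equiv q\pmod 2$ you are forced onto the (misjustified) fallback, and you must observe that $\min(l,p)=2$ has the parity of the even one of $p,q$. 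The analysis does close up --- (i) $p\equiv q\pmod 2$: equal-$m$ pairs with $m=1$ or $m=2\le\min(l,p,q)$; (ii) $p\not\equiv q$, $l\ge 3$: pair the $l$-path's central pair with the like-parity path; (iii) $p\not\equiv q$, $l=2$: the corrected fallback with $m=2$ --- but none of this is carried out in your write-up, and as it stands the third case is an announced plan resting on one incorrect sufficient condition rather than a proof.
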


\section{Bicyclic graphs}\label{sec: theta}

The next theorem gives the determinant of $D(G)$ when $G=\theta(l,p,q)$, completing the remaining cases in \cite{nosso}.


\begin{teo} \label{theta}
The following assertions hold:
\begin{enumerate}[(a)]

\item If $G = \theta(1, p, q)$ for even integers p and q, then
$\det D(G) =\frac{-(p + q)^2}{4}.$
\item If $G = \theta(2, 2, 2)$, then $\det D(G) = -16.$
\item If $G = \theta(2, 2, q)$ for some odd integer $q > 1$, then
$\det D(G) = q^2 -5.$
\item Otherwise, $\det D(G) = 0.$

\end{enumerate}
\end{teo}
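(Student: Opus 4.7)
\medskip
\noindent\textbf{Proof plan.} My plan is to compute $\det D(G)$ for $G=\theta(l,p,q)$ by a direct case analysis of the distance matrix, exploiting the path structure of the graph via systematic row and column operations. Label the two hub vertices by $u$ and $v$, and the internal vertices of the $l$-, $p$-, and $q$-paths by $x_1,\dots,x_{l-1}$, $y_1,\dots,y_{p-1}$, and $z_1,\dots,z_{q-1}$, respectively. Observe that $d(u,v)=\min\{l,p,q\}$, so in case (a) the hubs are joined by the chord $uv$, while in (b) and (c) $d(u,v)=2$. For an internal vertex $y_i$ on the $p$-path one has $d(u,y_i)=\min\{i,\,(p-i)+d(u,v)\}$ and $d(v,y_i)=\min\{p-i,\,i+d(u,v)\}$, both piecewise linear in $i$ with a ``fold'' near the middle of the path; distances between internal vertices on distinct paths are minima over the (at most four) routes through the two hubs.

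\medskip
\noindent\textbf{Cases (a), (b), (c).} For each path of length at least $2$ I will consider the row differences $R_{v_i}-R_{v_{i+1}}$ of $D(G)$, where $v_i,v_{i+1}$ are consecutive internal vertices of that path. Away from the fold, these differences are sparse: their entries lie in $\{-1,0,1\}$ and are concentrated at the hubs, the adjacent internal vertex, and at most one fold vertex. After performing these row operations together with the analogous column operations on each of the three paths, $D(G)$ reduces to a matrix with a small ``core'' block (indexed by the hubs and the fold vertices) surrounded by a sparse skeleton. Expanding along the sparse rows and columns collapses the computation to a small determinant whose order depends only on the parities of $l,p,q$; evaluating that small determinant yields $-(p+q)^2/4$ in (a), $-16$ in (b), and $q^2-5$ in (c). For (b) the matrix is $5\times 5$ and a direct computation suffices; for (c) the reduction is applied only to the $q$-path; for (a) the reduction is applied to both the $p$- and $q$-paths and takes advantage of the automorphism swapping $u$ and $v$.

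\medskip
\noindent\textbf{Case (d).} Several subcases are already established by Propositions 1 and 2 of \cite{nosso}. The residual subcases not addressed there---in particular $\theta(1,p,q)$ with $p,q$ of mixed parities or distinct odd values, and $\theta(l,2,q)$ with $l\geq 3$ (together with their obvious symmetric counterparts)---I will handle by exhibiting an explicit nontrivial vector in the kernel of $D(G)$. Such a vector typically alternates $\pm 1$ along one or two of the three paths, with scalars chosen so that the rows indexed by the hubs and by the fold vertices vanish; a routine verification using the distance formulas from the Setup then shows that every row of $D(G)$ annihilates it, giving $\det D(G)=0$.

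\medskip
\noindent\textbf{Main obstacle.} The principal difficulty is the piecewise-linear behavior of the distance function, whose fold points depend both on the parities of $l,p,q$ and on which of them attains $\min\{l,p,q\}$. This forces several parity subcases whenever a row difference crosses a fold, which complicates the bookkeeping in the reduction step of (a)--(c); for case (d), identifying the correct kernel vector in each residual subcase requires analogous care in matching parities of the path lengths.
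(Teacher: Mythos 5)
Your strategy is genuinely different from the paper's, but as written it contains two concrete gaps. First, the structural claim on which the whole reduction rests is not correct: for consecutive internal vertices $y_i,y_{i+1}$ of the $p$-path, the row difference $R_{y_i}-R_{y_{i+1}}$ has entry $d(y_i,w)-d(y_{i+1},w)$ at \emph{every} column $w$, and since $y_i$ and $y_{i+1}$ are adjacent this entry is $\pm 1$ for essentially every vertex $w$ (it records on which side of the edge $y_iy_{i+1}$ the vertex $w$ lies, with occasional ties giving $0$). These rows are therefore dense, not ``concentrated at the hubs, the adjacent internal vertex, and at most one fold vertex.'' Sparsity only appears at the level of second differences $R_{y_{i-1}}-2R_{y_i}+R_{y_{i+1}}$ (the standard trick behind the tree formula), and even then the rows indexed by the hubs, the neighbours of the hubs, and the fold vertices remain dense, so the surviving core is a symbolic matrix whose entries still depend on $p$ and $q$. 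Second, and more seriously, the decisive step --- evaluating that core determinant and showing it equals $-(p+q)^2/4$ in case (a) and $q^2-5$ in case (c) --- is simply asserted; that evaluation is where all of the content of the theorem lives, and nothing in the proposal indicates how it closes. The treatment of case (d) has the same character: the kernel vectors are only described as ``typically'' alternating, without a verified formula per subcase. (A smaller point: $\theta(l,2,q)$ with $l,q\ge 3$ is not a residual subcase, since permuting the three paths puts it in the already-cited family $\theta(l,p,q)$ with $l\ge 2$, $p,q\ge 3$; the genuinely open subcases of (d) beyond the conference paper are the $\theta(1,p,q)$ with $p,q$ of mixed parity or distinct odd values, as you note.)

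For comparison, the paper avoids this bookkeeping entirely. It quotes (c) and (d) from the earlier conference paper and computes (b) directly; for (a) it first handles $\theta(1,2,2k)$ by bordering the cycle matrix $D(C_{2k+1})$ and using its known explicit inverse, so that a Schur-complement computation gives $\det D=-v^tD(C_{2k+1})^{-1}v\cdot k(k+1)=-(k+1)^2$; it then establishes the explicit unimodular congruence $D(\theta(1,2(s-1),2(k+1)))=N\,D(\theta(1,2s,2k))\,N^t$, which shows the determinant is invariant under redistributing the two even path lengths, and induction on $s$ reduces the general case to $\theta(1,2,2k)$. If you wish to pursue your route, you would need to (i) replace first differences by second differences, (ii) exhibit the exact core matrix as a function of $p$ and $q$ and carry out its evaluation, and (iii) write down and verify an explicit kernel vector for each residual subcase of (d).
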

\begin{proof}

Items (c) and (d) have been proven in \cite{nosso} and {correspond} to Proposition \ref{1} and Proposition \ref{2} respectively. Case (b) can be easily computed. The proof of case (a) will be  divided in the following 2 cases:

\textbf{Case 1:} 

Let $G=\theta(1,2,2k),$  for some $k \ge 1$, with its vertices labeled as in Figure~\ref{Figura theta(1,2,2k)}.
\begin{figure}[H]
  \centering
  \begin{tikzpicture}[scale=1]
      
      \fill (0,0) circle(2pt) node[left] {$v_{1}$};
      \draw[-, thick] (0,0)--(1,-0.5); 
      \draw[-, thick] (0,0)--(1,0.5); 
      
      \draw[-, thick] (1,-0.5)--(1,0.5); 
      
      \fill (1,0.5) circle(2pt) node[above] {$v_2$}; 
      \draw[-, thick] (1,0.5)--(1.5,0.5);
      \draw[-, dotted, thick] (1.5,0.5)--(2.5,0.5);
      \draw[-, thick] (2.5,0.5)--(3,0.5);

      \fill (3,0.5) circle(2pt) node[above] {$v_{k+1}$};

      \fill (4,0) circle(2pt) node[right] {$v_{k+2}$};
      \draw[-, thick] (4,0)--(3,-0.5); 
      \draw[-, thick] (4,0)--(3,0.5); 
           
      \fill (3,-0.5) circle(2pt) node[below] {$v_{k+3}$};
      \draw[-, thick] (3,-0.5)--(2.5,-0.5);
      \draw[-, dotted, thick] (2.5,-0.5)--(1.5,-0.5);
      \draw[-, thick] (1.5,-0.5)--(1,-0.5);

      \fill (1,-0.5) circle(2pt) node[below] {$v_{2k+2}$}; 
      
   \end{tikzpicture}
   \caption{$\theta(1,2,2k)$} \label{Figura theta(1,2,2k)}  
\end{figure}
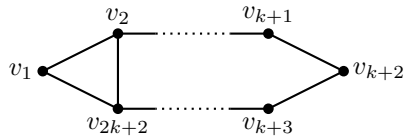  
The distance matrix of $\theta(1,2,2k)$ is
$$
D(\theta(1,2,2k)) =  
\left(\begin{array}{cc}
 0  & v^t\\
 v & D(C_{2k+1})
\end{array}\right),
$$
where $D(C_{2k+1})$ is the distance matrix of the cycle induced by the vertices $v_2, \dots, v_{2k+2}$ and $v^t = 
(1,2,\dots,k,k+1,k,\dots,2,1)$. 

From \cite{bapat-kirkland-neumann-2005}, we know that 
\begin{equation}\label{eq inv C_{2k+1}}
D(C_{2k+1})^{-1} = -2 I - C^k - C^{k+1} + \frac{2k+1}{k(k+1)}J,
\end{equation}
and $\det D(C_{2k+1}) = k(k+1)$, {where $J$ is the all ones matrix, with appropriate size,} and  $C$ is the cyclic permutation matrix of order $2k + 1$
having $C_{i,i+1} = 1$ for $i = 1,\dots,2k + 1$, taking indices modulo $2k + 1$. {Therefore, we have that} 
\begin{equation}\label{eq inv theta(1,2,2k)}
D(\theta(1,2,2k))^{-1} = M_1^t M_2 M_1, 
\end{equation}
where
\begin{eqnarray*}
M_1 & = &
\left(\begin{array}{cc}
 1  & -v^t D(C_{2k+1})^{-1} \\
 0 & I
\end{array}\right), 
\\[2ex]
M_2 &=&  
\left(\begin{array}{cc}
 (-v^t D(C_{2k+1})^{-1}v)^{-1}  & 0 \\
 0 & D(C_{2k+1})^{-1}
\end{array}\right),
\end{eqnarray*}

and
\begin{eqnarray}
\nonumber \det D(\theta(1,2,2k)) &=& \det M_2^{-1} = - v^t D(C_{2k+1})^{-1}v \ \det D(C_{2k+1})\\
\label{eq det theta(1,2,2k)}                                        &=& - v^t D(C_{2k+1})^{-1}v \ k(k+1).
\end{eqnarray}
Now we will calculate $v^t D(C_{2k+1})^{-1}v$, {using equation} \eqref{eq inv C_{2k+1}} we obtain
{\footnotesize 
\begin{eqnarray}
\nonumber v^tD(C_{2k+1})^{-1} v & = &  -2 v^tv - v^t C^k v - v^t C^{k+1} v + \frac{2k+1}{k(k+1)} v^t J v\\
\nonumber                      & = &  -4 \sum_{i=1}^k i^2 - 2 (k+1)^2  - 2 \sum_{i=1}^k i (k+1-i) \\
\label{eq vt C(2k+1)^{-1} v}                      &   &   - 2 \sum_{i=1}^{k+1} i (k+2-i) + \frac{2k+1}{k(k+1)} (k+1)^4\\
\nonumber                      & = &  - 2 \sum_{i=1}^k i (k+1) - 2 \sum_{i=1}^{k+1} i (k+2) + \frac{(2k+1)(k+1)^3}{k}\\
\nonumber                      & = &  - k (k+1)^2 -  (k+1)(k+2)^2 + \frac{(2k+1)(k+1)^3}{k} = \frac{k+1}{k}.
 \end{eqnarray}
} 
Combining this result with \eqref{eq det theta(1,2,2k)}, we deduce that  
\begin{equation}\label{eq det theta(1,2,2k) final}
\det D(\theta(1,2,2k)) = -(k+1)^2 = - \frac{(2k+2)^2}{4} = -n^2 (-2)^{-2},
\end{equation}
with $n=p+q$, {where $p=2$ and $q=2k$}.

\textbf{Case 2:} 

 Let  $H=\theta(1,2s,2k)$ and $G=\theta(1,2(s-1),2(k+1))$, for some $k \ge {2}$ and $s \ge 2,$ with  its vertices labeled as in Figure~\ref{Figura theta(1,2s,2k)} and Figure~\ref{Figura 
theta(1,2(s-1),2(k+1))} respectively.

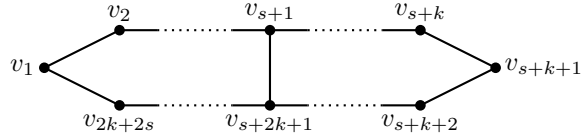
\begin{figure}[H]
  \centering
  \begin{tikzpicture}[scale=1]
      
      \fill (-3,0) circle(2pt) node[left] {$v_{1}$};
      \draw[-, thick] (-3,0)--(-2,-0.5); 
      \draw[-, thick] (-3,0)--(-2,0.5); 
      
      \fill (-2,0.5) circle(2pt) node[above] {$v_{2}$}; 
      \draw[-, thick] (-2,0.5)--(-1.5,0.5);
      \draw[-, dotted, thick] (-1.5,0.5)--(-0.5,0.5);
      \draw[-, thick] (-0.5,0.5)--(0,0.5);
      
      \draw[-, thick] (0,-0.5)--(0,0.5); 
      
      \fill (0,0.5) circle(2pt) node[above] {$v_{s+1}$}; 
      \draw[-, thick] (0,0.5)--(0.5,0.5);
      \draw[-, dotted, thick] (0.5,0.5)--(1.5,0.5);
      \draw[-, thick] (1.5,0.5)--(2,0.5);

      \fill (2,0.5) circle(2pt) node[above] {$v_{s+k}$};

      \fill (3,0) circle(2pt) node[right] {$v_{s+k+1}$};
      \draw[-, thick] (3,0)--(2,-0.5); 
      \draw[-, thick] (3,0)--(2,0.5); 
           
      \fill (2,-0.5) circle(2pt) node[below] {$v_{s+k+2}$};
      \draw[-, thick] (2,-0.5)--(1.5,-0.5);
      \draw[-, dotted, thick] (1.5,-0.5)--(0.5,-0.5);
      \draw[-, thick] (0.5,-0.5)--(0,-0.5);

      \fill (0,-0.5) circle(2pt) node[below] {$v_{s+2k+1}$}; 
      
      \fill (-2,-0.5) circle(2pt) node[below] {$v_{2k+2s}$}; 
      \draw[-, thick] (-2,-0.5)--(-1.5,-0.5);
      \draw[-, dotted, thick] (-1.5,-0.5)--(-0.5,-0.5);
      \draw[-, thick] (-0.5,-0.5)--(0,-0.5);
            
   \end{tikzpicture}
   \caption{$\theta(1,2s,2k)$} \label{Figura theta(1,2s,2k)}  
\end{figure}

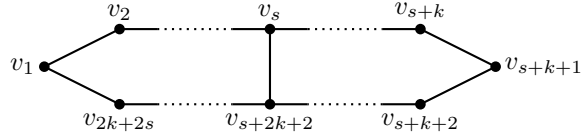
\begin{figure}[H]
  \centering
  \begin{tikzpicture}[scale=1]
      
      \fill (-3,0) circle(2pt) node[left] {$v_{1}$};
      \draw[-, thick] (-3,0)--(-2,-0.5); 
      \draw[-, thick] (-3,0)--(-2,0.5); 
      
      \fill (-2,0.5) circle(2pt) node[above] {$v_{2}$}; 
      \draw[-, thick] (-2,0.5)--(-1.5,0.5);
      \draw[-, dotted, thick] (-1.5,0.5)--(-0.5,0.5);
      \draw[-, thick] (-0.5,0.5)--(0,0.5);
      
      \draw[-, thick] (0,-0.5)--(0,0.5); 
      
      \fill (0,0.5) circle(2pt) node[above] {$v_{s}$}; 
      \draw[-, thick] (0,0.5)--(0.5,0.5);
      \draw[-, dotted, thick] (0.5,0.5)--(1.5,0.5);
      \draw[-, thick] (1.5,0.5)--(2,0.5);

      \fill (2,0.5) circle(2pt) node[above] {$v_{s+k}$};

      \fill (3,0) circle(2pt) node[right] {$v_{s+k+1}$};
      \draw[-, thick] (3,0)--(2,-0.5); 
      \draw[-, thick] (3,0)--(2,0.5); 
           
      \fill (2,-0.5) circle(2pt) node[below] {$v_{s+k+2}$};
      \draw[-, thick] (2,-0.5)--(1.5,-0.5);
      \draw[-, dotted, thick] (1.5,-0.5)--(0.5,-0.5);
      \draw[-, thick] (0.5,-0.5)--(0,-0.5);

      \fill (0,-0.5) circle(2pt) node[below] {$v_{s+2k+2}$}; 
      
      \fill (-2,-0.5) circle(2pt) node[below] {$v_{2k+2s}$}; 
      \draw[-, thick] (-2,-0.5)--(-1.5,-0.5);
      \draw[-, dotted, thick] (-1.5,-0.5)--(-0.5,-0.5);
      \draw[-, thick] (-0.5,-0.5)--(0,-0.5);
      
  \end{tikzpicture}
  \caption{$\theta(1,2(s-1),2(k+1))$} \label{Figura theta(1,2(s-1),2(k+1))}  
\end{figure}
The distance matrices of $G$ and $H$ are
$$
D(G) = 
\left(
\begin{array}{cc}
P & A^t\\
A & P
\end{array}
\right)\quad\text{and}\quad
D(H) = 
\left(
\begin{array}{cc}
P & B^t\\
B & P
\end{array}
\right),
$$
where 
{\footnotesize 
\begin{eqnarray} \label{fla1}
P &=& \sum_{i=1}^{k+s} \sum_{j=1}^{k+s} |i-j| \ e_ie_j^t,  
\end{eqnarray}
}%
is the distance matrix of $P_{k+s}$ (the path on $k+s$ vertices), 
and $e_i$ denotes a vector having an entry equal to $1$ on the $i$-th coordinate and $0$'s {in} the remaining coordinates. Moreover,
{\footnotesize 
\begin{eqnarray}\label{fla2}
B^t &=& \sum_{j=1}^{k+s} (k+s+1-j) e_1 e_j^t + \sum_{i=2}^{k+s} (k+s+1-i) e_i e_1^t \nonumber \\
    & & + \sum_{i=2}^{s+1} \sum_{j=2}^{k+1} (s+k+3-j-i) e_i e_j^t + \sum_{i=s+2}^{s+k} \sum_{j=k+2}^{k+s} (j+i-s-k-1) e_i e_j^t \nonumber \\
    & & + \sum_{i=3}^{s+1} s \ e_i e_{i+k-1}^t + \sum_{i=2}^{s+1}\sum_{\substack{j=k+2\\j \neq i+k-1}}^{k+s} |r_{2s+1}(1-k+j-i) - s-1| e_i e_j^t \nonumber \\
    & & + \sum_{i=s+2}^{s+k}  k \ e_i e_{i-s}^t + \sum_{i=s+2}^{s+k}\sum_{\substack{j=2\\j \neq i-s}}^{k+1} |r_{2k+1}(s+j-i) - k-1| e_i e_j^t 
\end{eqnarray}
}
and
{\footnotesize 
\begin{eqnarray}\label{fla3}
A^t &=& \sum_{j=1}^{k+s} (k+s+1-j) e_1 e_j^t + \sum_{i=2}^{k+s} (k+s+1-i) e_i e_1^t \nonumber\\
    & & + \sum_{i=2}^{s} \sum_{j=2}^{k+2} (s+k+3-j-i) e_i e_j^t + \sum_{i=s+1}^{s+k} \sum_{j=k+3}^{k+s} (j+i-s-k-1) e_i e_j^t \nonumber\\
    & & + \sum_{i=3}^{s} (s-1) \ e_i e_{i+k}^t + \sum_{i=2}^{s}\sum_{\substack{j=k+3\\j \neq i+k}}^{k+s} |r_{2s-1}(j-k-i) - s| e_i e_j^t \nonumber \\
    & & + \sum_{i=s+1}^{s+k}  (k+1) \ e_i e_{i-s+1}^t + \sum_{i=s+1}^{s+k}\sum_{\substack{j=2\\j \neq i-s+1}}^{k+2} |r_{2k+3}(s+j-i-1) - k-2| e_i e_j^t, 
\end{eqnarray}
}
where $r_{\alpha}(\beta)$ represent the remainder when integer $\beta$ is divided 
by $\alpha$. 

It is easy to see that $P$ is invertible and 
{\footnotesize
\begin{eqnarray*}
P^{-1} &=& -\frac{k+s-2}{2(k+s-1)} \ e_1 e_1^t - \frac{k+s-2}{2(k+s-1)} \ e_{k+s} e_{k+s}^t-\sum_{i=2}^{k+s-1}  e_ie_i^t  \\
             & & + \sum_{i=1}^{k+s-1} \frac{1}{2} \ e_i e_{i+1}^t + \sum_{i=2}^{k+s} \frac{1}{2} \ e_{i} e_{i-1}^t \\
             & & + \frac{1}{2(k+s-1)} \ e_1 e_{k+s}^t + \frac{1}{2(k+s-1)} \ e_{k+s} e_{1}^t.
\end{eqnarray*}
}
We define 
$$
N := 
\left(
\begin{array}{cc}
    I        &     0           \\
    (A  - M B) P^{-1}  &   M         
\end{array}
\right),
$$
{where}
$$
M := e_1 e_1^t + e_2 e_{k+1}^t - e_2 e_{k+s}^t + \sum_{i=2}^{k+s} e_{i}e_{i-1}^t.  
$$
We claim that 
\begin{equation}\label{eq D(G) = N D(H) Nt}
D(G) = N \cdot D(H) \cdot N^t.
\end{equation}
Indeed, it is easy to see that 
$$
N
\cdot
D(H)
\cdot
N^t 
= 
\left(
\begin{array}{cc}
P & A^t\\
A & \widehat{P}
\end{array}
\right).
$$
where 
$$
\widehat{P} = A \ P^{-1} \ (A^t  - B^t M^t) + (A  - M B) \ P^{-1} \ B^t \ M^t + M \ P \ M^t.
$$
Hence, it is sufficient to prove that $\widehat{P} = P$.
We first compute $M \ P \ M^t$. Since 
$$
M^t = e_1 e_1^t + e_{k+1}e_2^t - e_{k+s}e_2^t + \sum_{i=2}^{k+s} e_{i-1}e_{i}^t,  
$$
we have
{\footnotesize
$$
M \ P = \sum_{j=1}^{k+s} (j-1) \ e_1e_j^t + \sum_{j=1}^{k+s} (|k+1-j| + 2j - 1- k - s) \ e_2e_j^t + \sum_{i=3}^{k+s}\sum_{j=1}^{k+s} |i-1-j| \ 
e_{i}e_j^t
$$
}
and
{\footnotesize
\begin{eqnarray}
\label{eq: M P Mt} M \ P \ M^t &=& (1 - s) \ e_2e_1^t + (1-s) \ e_1e_2^t + 4 (1 - s) \ e_2e_2^t \\ 
\nonumber                  & & + \sum_{i=3}^{k+s} (i-2) \ e_{i}e_1^t + \sum_{j=3}^{k+s} (j-2) \ e_1e_{j}^t\\ 
\nonumber                  & & + \sum_{j=3}^{k+s} (|k+2-j| + 2j - 3- k - s) \ e_2e_{j}^t \\ 
\nonumber                  & & + \sum_{i=3}^{k+s} (|k+2-i| + 2i - 3- k - s) \ e_{i}e_2^t \\
\nonumber                  & & + \sum_{i=3}^{k+s} \sum_{j=3}^{k+s} |i-j| \ e_{i}e_{j}^t.
\end{eqnarray}
}
We continue obtaining $A^t - B^t M^t$, multiplying $B^t$ with $M^t$ we have
{\footnotesize 
\begin{eqnarray*}
B^t M^t&=& B^t e_1 e_1^t + B^t e_{k+1}e_2^t - B^t e_{k+s}e_2^t + \sum_{i=2}^{k+s} B^t e_{i-1}e_{i}^t\\
&=& \sum_{i=1}^{k+s} (k+s+1-i) e_i e_1^t  + (k+2s-1) e_1 e_2^t  +  \sum_{j=3}^{k+s} (k+s+2-j) e_1 e_{j}^t \\
&&   + \sum_{i=2}^{s} (k+2s+3-3i) e_i e_2^t + \sum_{i=s+1}^{k+s} (k+2-i) e_i e_2^t \\
    & & + \sum_{i=2}^{s+1} \sum_{j=3}^{k+2} (s+k+4-j-i) e_i e_{j}^t + \sum_{i=s+2}^{s+k} \sum_{j=k+3}^{k+s} (j+i-s-k-2) e_i e_{j}^t\\
    & & + \sum_{i=3}^{s}  s \ e_i e_{i+k}^t + \sum_{i=2}^{s+1}\sum_{\substack{j=k+3\\j \neq i+k}}^{k+s} |r_{2s+1}(j-i-k) - s-1| e_i e_{j}^t 
\\
    & & + \sum_{i=s+2}^{s+k}  k \ e_i e_{i-s+1}^t + \sum_{i=s+2}^{s+k}\sum_{\substack{j=3\\j \neq i-s+1}}^{k+2} |r_{2k+1}(s+j-i-1) - k-1| e_i e_{j}^t. 
\end{eqnarray*}
}
From this, we deduce that
{\footnotesize 
\begin{eqnarray*}
A^t - B^tM^t &=& \sum_{i=1}^{s} (2i-2-s) e_i e_2^t  + \sum_{i=s+1}^{s+k} s \ e_i e_2^t + \sum_{i=s+1}^{s+k} \sum_{j=3}^{k+s} e_i e_j^t - \sum_{i=1}^{s} 
\sum_{j=3}^{k+s}  e_i e_j^t. 
\end{eqnarray*}
}
It follows that 
{\footnotesize 
\begin{eqnarray*}
P^{-1}(A^t - B^tM^t) &=& \Big(-\frac{k+s-2}{2(k+s-1)} \ e_1 e_1^t + \frac{1}{2} \ e_1 e_{2}^t + \frac{1}{2(k+s-1)} \ e_1 e_{k+s}^t  \\
             & & + \sum_{i=2}^{k+s-1} \frac{1}{2} \ e_i e_{i+1}^t -\sum_{i=2}^{k+s-1}  e_ie_i^t  + \sum_{i=2}^{k+s-1} \frac{1}{2} \ e_{i} e_{i-1}^t \\
             & &  + \frac{1}{2(k+s-1)} \ e_{k+s} e_{1}^t + \frac{1}{2} \ e_{k+s} e_{k+s-1}^t- \frac{k+s-2}{2(k+s-1)} \ e_{k+s} e_{k+s}^t \Big)\\
             & & \cdot \Big(\sum_{i=1}^{s} (2i-2-s) e_i e_2^t  + \sum_{i=s+1}^{s+k} s \ e_i e_2^t + \sum_{i=s+1}^{s+k} \sum_{j=3}^{k+s} e_i e_j^t - 
                  \sum_{i=1}^{s} \sum_{j=3}^{k+s}  e_i e_j^t \Big)\\ 
             &=& e_1e_2^t + \sum_{i=3}^{k+s} e_{s} e_{i}^t - \sum_{i=2}^{k+s} e_{s+1} e_{i}^t. 
\end{eqnarray*}
}
Finally, we see that 
{\footnotesize 
\begin{eqnarray*}
A P^{-1}(A^t - B^tM^t) &=& A \Big(e_1e_2^t + \sum_{i=3}^{k+s} e_{s} e_{i}^t - \sum_{i=2}^{k+s} e_{s+1} e_{i}^t\Big)\\
                             &=& s \ e_{1} e_{2}^t + (s-2) e_{2} e_{2}^t \\
                              & & + \sum_{i=3}^{k+1} (s-3) e_{i} e_{2}^t + \sum_{i=k+2}^{k+s} (s+2k+1-2i) e_{i} e_{2}^t \\
                              & & + \sum_{j=3}^{k+s} e_{1} e_{j}^t - \sum_{i=3}^{k+s} \sum_{j=3}^{k+s} e_{i} e_{j}^t,  
\end{eqnarray*}
}
and
{\footnotesize 
\begin{eqnarray*}
(A - M B)P^{-1} B^t M^t &=&  \Big(e_2 e_1^t + \sum_{i=3}^{k+s} e_{i} e_{s}^t - \sum_{i=2}^{k+s} e_{i} e_{s+1}^t \Big) B^t M^t\\ 
                              &=& s \ e_{2} e_{1}^t + (3s-2) e_{2} e_{2}^t \\
                              & & + \sum_{j=3}^{k+1} (s-1) e_{2} e_{j}^t + \sum_{j=k+2}^{k+s} (s+2k+3-2j) e_{2} e_{j}^t \\
                              & &  + \sum_{i=3}^{k+s} e_{i} e_{1}^t + \sum_{i=3}^{k+s} 2 e_{i} e_{2}^t + \sum_{i=3}^{k+s} \sum_{j=3}^{k+s} e_{i} 
e_{j}^t. 
\end{eqnarray*}
}  
Thus
{\footnotesize 
\begin{eqnarray*}
A P^{-1}(A^t - B^tM^t) + (A - M B)P^{-1} B^t M^t  &=& s \ e_{1} e_{2}^t + s \ e_{2} e_{1}^t + 4(s-1) e_{2} e_{2}^t \\
                              & & + \sum_{j=3}^{k+s} e_{1} e_{j}^t + \sum_{i=3}^{k+s} e_{i} e_{1}^t\\  
                              & & + \sum_{i=3}^{k+s} (s + k + 1 - i -|k+2-i|) e_{i} e_{2}^t \\
                              & & + \sum_{j=3}^{k+s} (s + k + 1 - j -|k+2-j|) e_{2} e_{j}^t. 
\end{eqnarray*}
}  
Therefore, by \eqref{eq: M P Mt}, we obtain
$$
\widehat{P} = A P^{-1}(A^t - B^tM^t) + (A - M B)P^{-1} B^t M^t + M {P} M^t = P.  
$$ 
This complete the proof of \eqref{eq D(G) = N D(H) Nt}. 
Futhermore, since $\det N \cdot \det N^t = 1$, we deduce that 
$$
\det D(G) =  \det D(H). 
$$
Combining this with \eqref{eq det theta(1,2,2k) final}, using an inductive argument, we have 
$$
\det{D}(\theta(1,2s,2k)) = -n^2(-2)^{-2},
$$
with $n=p+q$, where $p=2s$ and $q=2k$.
\end{proof}

{In order to compute} {$\cof D(\theta(l,p,q))$,} {we need firstly compute the determinant of the} {graphs defined as follows}.

\begin{defi}
For each positive integers $l, p, q$ such that at most one of them is
1, we denote by $\theta^{\prime}(l, p, q)$ {any} graph that arises from $\theta(l, p, q)$ by
adding a pendant edge (see {Figures}~\ref{Figura F(1,2,2k,1)} {and~\ref{Figura F(1,2s,2k,1)}}).
\end{defi}

{Since every graph $\theta'(l,p,q)$ has as blocks $\theta(l,p,q)$ and one edge, if follows from Theorem~\ref{2} that $\det D(\theta'(l,p,q))$ and $\cof D(\theta'(l,p,q))$ are well defined (i.e., they are independent of the vertex of $\theta(l,p,q)$ to which the pendant edge is attached in order to obtain $\theta'(l,p,q)$). Moreover,} {from Theorem \ref{2}, it follows that}
\begin{equation*}{\cof D(\theta({l,p,q})) = -2 \det D(\theta({l,p,q})) - \det D(\theta^{\prime}({l,p,q})).}\end{equation*}

\begin{teo} \label{theta prime}
Let $G= \theta^{\prime}(l, p, q)$, for integers $l,p,q$ such that at most one of them is
1. Then, the following assertions hold:
\begin{enumerate}[(a)]
\item $G = \theta^{\prime}(1, p, q)$ for some even integers $p$ and $q$, then $\det D(G)= \frac{(1 + p + q)^2 - 1}{2}$
\item If $G = \theta^{\prime}(2, 2, 2)$, then
$\det D(G) = -16.$
\item If $G = \theta^{\prime}(2, 2, q)$ for some odd integer $q > 1$, then $\det D(G) = -2(q^2 + 2q - 9).$
\item Otherwise, $\det D(G) = 0.$
\end{enumerate}
\end{teo}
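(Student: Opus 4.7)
The proof follows the shape of that of Theorem~\ref{theta}, with the Graham--Hoffman--Hosoya decomposition (Theorem~\ref{2}) serving as a bridge. Since the two blocks of $\theta'(l,p,q)$ are $\theta(l,p,q)$ and the edge $K_2$, and $\det D(K_2)=-1$, $\cof D(K_2)=-2$, Theorem~\ref{2} yields
$$\det D(\theta'(l,p,q))\;=\;-2\det D(\theta(l,p,q))\;-\;\cof D(\theta(l,p,q)),$$
and guarantees that this value is independent of where the pendant edge is attached. Combined with Theorem~\ref{theta}, the problem therefore reduces to computing $\cof D(\theta(l,p,q))$ in each of the four cases.

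For case~(a) the target is $\cof D(\theta(1,p,q))=-(p+q)$: substituting this and $\det D(\theta(1,p,q))=-(p+q)^2/4$ into the identity above produces the claimed $\frac{(1+p+q)^2-1}{2}$. I would verify this in two sub-steps paralleling Case~1 and Case~2 of Theorem~\ref{theta}. In the base sub-case $p=2$, I would compute $\mathbf{1}^tD(\theta(1,2,2k))^{-1}\mathbf{1}$ directly from equation~\eqref{eq inv theta(1,2,2k)} (i.e.\ from $M_1^tM_2M_1$), producing an expression in the spirit of~\eqref{eq vt C(2k+1)^{-1} v} that evaluates to $\frac{2}{k+1}$ and hence $\cof D(\theta(1,2,2k))=-(2+2k)$. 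For the inductive sub-step from $\theta(1,2s,2k)$ to $\theta(1,2(s-1),2(k+1))$, the congruence~\eqref{eq D(G) = N D(H) Nt} with $\det N=1$ gives $D(H)^{-1}=N^tD(G)^{-1}N$, so
$$\mathbf{1}^tD(H)^{-1}\mathbf{1}\;=\;(N\mathbf{1})^tD(G)^{-1}(N\mathbf{1});$$
writing $N\mathbf{1}=\mathbf{1}+w$ with $w$ computed from the sparse description of $N$, the discrepancy reduces to a correction term that I expect to vanish after a short calculation, closing the induction on~$s$.

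Cases~(b) and~(c) are single-graph or one-parameter computations of the same flavour: $\cof D(\theta(2,2,2))$ is a direct finite evaluation, and $\cof D(\theta(2,2,q))$ for odd $q>1$ can be obtained from a block decomposition of $D(\theta(2,2,q))$ around a cycle, analogous to that of Case~1 of Theorem~\ref{theta}. Case~(d) is the subtlest: because $\det D(\theta(l,p,q))=0$ by Proposition~\ref{lem: theta}, the displayed identity reduces the claim $\det D(\theta'(l,p,q))=0$ to $\cof D(\theta(l,p,q))=0$. The plan here is to establish $\operatorname{rank} D(\theta(l,p,q))\le n-2$, which forces $\operatorname{adj} D(\theta(l,p,q))=0$ and hence zero cofactor sum; equivalently, one exhibits two linearly independent null vectors of $D(\theta(l,p,q))$, natural candidates for which arise from the reflective and path-swapping symmetries of $\theta(l,p,q)$.

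The hard part will be case~(d). Proposition~\ref{lem: theta} only records vanishing of $\det D(\theta(l,p,q))$, which supplies a single kernel vector; extracting a second independent null vector, or equivalently a rank drop of two, uniformly across the three sub-families (namely $\theta(1,2k-1,2k-1)$, $\theta(2,2,\text{even})$, and $\theta(l,p,q)$ with $l\ge 2$, $p,q\ge 3$) will require case-by-case analysis and is where the bulk of the remaining work lies.
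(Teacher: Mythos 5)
For case (a) --- the only case the paper actually proves in detail --- your argument is essentially the paper's proof run in the opposite direction rather than a genuinely different route. The paper computes $\det D(\theta'(1,2,2k))$ directly by bordering $D(\theta(1,2,2k))$ and taking a Schur complement; the scalar driving that computation is $(1,\mathbf{1}^t)M_1^tM_2M_1(1,\mathbf{1}^t)^t$, which is exactly the quantity $\mathbf{1}^tD(\theta(1,2,2k))^{-1}\mathbf{1}$ you propose to evaluate, and the paper then deduces $\cof D(\theta(1,p,q))$ from the same Graham--Hoffman--Hosoya identity you invoke, just rearranged. Your inductive step is also the paper's: the same congruence matrix $N$ from \eqref{eq D(G) = N D(H) Nt} is reused, and the ``correction term'' you expect to vanish is precisely the paper's verification that $(A-MB)P^{-1}\mathbf{1}+M\mathbf{1}=\mathbf{1}$, i.e.\ that $N$ fixes the all-ones vector, so in fact $w=0$. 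Your numbers are consistent: $\mathbf{1}^tD^{-1}\mathbf{1}=2/(k+1)$ together with $\det D=-(k+1)^2$ gives $\cof D(\theta(1,2,2k))=-(2k+2)$ and hence $\det D(\theta')=2(k+1)(k+2)=\frac{(2k+3)^2-1}{2}$. The only thing your ordering buys is working with an $n\times n$ matrix instead of an $(n+1)\times(n+1)$ one; the calculation is the same.

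The genuine gap is case (d) (and, less seriously, (c)). The paper does not reprove these; it cites \cite{nosso}. Your reduction of (d) to $\cof D(\theta(l,p,q))=0$ is legitimate, but that fact is not available to you: the corollary asserting it is itself \emph{derived from} Theorem~\ref{theta prime}, so invoking it would be circular, and Proposition~\ref{lem: theta} gives only $\det D=0$. Your proposed mechanism, a rank drop to $n-2$, is stronger than necessary and not obviously true: for a symmetric matrix of corank exactly $1$ one has $\cof D=c\,(\mathbf{1}^tx)^2$ with $x$ spanning the kernel and $c\neq0$, so the alternative is to show $\mathbf{1}^tx=0$ --- and either way you need an explicit null vector. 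The ``path-swapping symmetry'' candidates fail: in $\theta(2,2,2k-2)$, with $a,b$ the midpoints of the two paths of length $2$, one computes $D(e_a-e_b)=-2(e_a-e_b)$, an eigenvector for $-2$, not for $0$. As written, the bulk of (d) is deferred, not proved. Separately, be aware that your own identity exposes an inconsistency in item (b): $\det D(\theta(2,2,2))=-16$ and $\cof D(\theta(2,2,2))=-16$ (both directly checkable on $K_{2,3}$) give $\det D(\theta'(2,2,2))=-2(-16)-(-16)=48$, not the stated $-16$; a direct Schur-complement computation on the $6\times6$ matrix confirms $48$. Your ``direct finite evaluation'' will therefore not reproduce the stated value of (b), and you should flag this rather than assert agreement.
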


\begin{proof}

Once more,  items (c) and (d) have already been proven in \cite{nosso} and (b) can be computed directly. The proof of case $(a)$ will be  divided in the following 2 cases. {All along this proof, $\theta'(l,p,q)$ denotes the graph that arises from $\theta(l,p,q)$ by adding a pendant edge incident precisely to the midpoint of the path of length $p$ joining the two vertices of degree $3$ of $\theta(l,p,q)$. {Notice that in Figures~\ref{Figura F(1,2,2k,1)} and~\ref{Figura F(1,2s,2k,1)} such midpoint is the vertex $v_1$.}}

\textbf{Case 1:}  

Let $G=\theta^{\prime}(1,2,2k),$ for some $k \ge 1,$ with its vertices labeled as in Figure~\ref{Figura F(1,2,2k,1)}.
\begin{figure}[H]
  \centering
  \begin{tikzpicture}[scale=1]
      
      \fill (0,0) circle(2pt) node[left] {$v_{1}$};
      \draw[-, thick] (0,0)--(1,-0.5); 
      \draw[-, thick] (0,0)--(1,0.5); 
      
      \draw[-, thick] (1,-0.5)--(1,0.5); 
      
      \fill (1,0.5) circle(2pt) node[above] {$v_2$}; 
      \draw[-, thick] (1,0.5)--(1.5,0.5);
      \draw[-, dotted, thick] (1.5,0.5)--(2.5,0.5);
      \draw[-, thick] (2.5,0.5)--(3,0.5);

      \fill (3,0.5) circle(2pt) node[above] {$v_{k+1}$};

      \fill (4,0) circle(2pt) node[right] {$v_{k+2}$};
      \draw[-, thick] (4,0)--(3,-0.5); 
      \draw[-, thick] (4,0)--(3,0.5); 
           
      \fill (3,-0.5) circle(2pt) node[below] {$v_{k+3}$};
      \draw[-, thick] (3,-0.5)--(2.5,-0.5);
      \draw[-, dotted, thick] (2.5,-0.5)--(1.5,-0.5);
      \draw[-, thick] (1.5,-0.5)--(1,-0.5);

      \fill (1,-0.5) circle(2pt) node[below] {$v_{2k+2}$}; 
      
      \fill (0,-1) circle(2pt) node[below] {$v_{2k+3}$}; 
      \draw[-, thick] (0,0)--(0,-1);
      
   \end{tikzpicture}
   \caption{$\theta^{\prime}(1,2,2k)$} \label{Figura F(1,2,2k,1)}  
\end{figure}  
The distance matrix of $\theta^{\prime}(1,2,2k)$ is
$$
D(\theta^{\prime}(1,2,2k)) =  
\left(\begin{array}{ccc}
 0 & v^t         & 1\\
 v & D(C_{2k+1}) & v + \mathbf{1}\\
 1 & v^t + \mathbf{1}^t   & 0
\end{array}\right),
$$
where $D(C_{2k+1})$ is the distance matrix of the cycle induced by the vertices $v_2, \dots, v_{2k+2}$ and $v^t = 
(1,2,\dots,k,k+1,k,\dots,2,1)$. By \eqref{eq inv theta(1,2,2k)}, we have that 
$$
\left(\begin{array}{cc}
 0 & v^t         \\
 v & D(C_{2k+1}) 
\end{array}\right)^{-1} = M_1^t M_2 M_1.
$$
If we define 
\begin{eqnarray*}
M_3 &:=&  
\left(\begin{array}{cc}
 {I}  & 0 \\
 -w^t M_1^t M_2 M_1 & {1}
\end{array}\right),
\\[2ex]
M_4 &:=&  
\left(\begin{array}{cc}
 M_1^t M_2 M_1 & 0 \\
 0 & (-w^t M_1^t M_2 M_1 w)^{-1} 
\end{array}\right),
\end{eqnarray*}
with $w^t := (1, v^t + \mathbf{1}^t)$, then 
$$D(G)^{-1} = M_3^t M_4 M_3$$    
and
$$
\det D(G) = \det M_4^{-1} = - w^t M_1^t M_2 M_1 w \ \det
\left(\begin{array}{cc}
 0 & v^t         \\
 v & D(C_{2k+1}) 
\end{array}\right)
.
$$
Combining this result with \eqref{eq det theta(1,2,2k) final}, we conclude that 
$$
\det D(G) = w^t M_1^t M_2 M_1 w \ (k+1)^2.
$$
Now we will calculate $w^t M_1^t M_2 M_1 w$, by \eqref{eq inv C_{2k+1}} we obtain
{\footnotesize
\begin{eqnarray*}
w^t M_1^t M_2 M_1 w  & = & (0,v^t) M_1^t M_2 M_1 \left(\begin{array}{c} 0\\ v \end{array}\right) + 2 \ (0,v^t) M_1^t M_2 M_1 \left(\begin{array}{c} 1\\ 
                           \mathbf{1} \end{array}\right)\\ 
                     &   & + (1,\mathbf{1}^t) M_1^t M_2 M_1 \left(\begin{array}{c} 1\\ \mathbf{1} \end{array}\right)\\
                     & = & (0,v^t)  \left(\begin{array}{c} 1\\ \mathbf{0} \end{array}\right) + 2 \ (1, \mathbf{0}^t) \left(\begin{array}{c} 1\\ 
                           \mathbf{1} \end{array}\right) + (1,\mathbf{1}^t) M_1^t M_2 M_1 \left(\begin{array}{c} 1\\ \mathbf{1} \end{array}\right)\\  
                     & = & 2 + (1,\mathbf{1}^t) M_1^t M_2 M_1 \left(\begin{array}{c} 1\\ \mathbf{1} \end{array}\right)\\
                     & = & 2 + \frac{v^t D(C_{2k+1})^{-1}v \ \mathbf{1}^t D(C_{2k+1})^{-1}\mathbf{1} - \Big(v^t D(C_{2k+1})^{-1}\mathbf{1} - 1\Big)^2}{v^t 
                           D(C_{2k+1})^{-1}v}.                           
\end{eqnarray*}
}
By \eqref{eq inv C_{2k+1}} we have that
{\footnotesize
\begin{eqnarray*}
v^t D(C_{2k+1})^{-1} \mathbf{1} &=& -2 v^t\mathbf{1} - v^t C^k \mathbf{1} - v^t C^{k+1} \mathbf{1} + \frac{2k+1}{k(k+1)} v^t J \mathbf{1} \\
                                &=& -8 \sum_{i=1}^k i - 4 (k+1) + \frac{2k+1}{k(k+1)} (2k+1) (2 \sum_{i=1}^k i + (k+1))\\
                                &=& -4 (k+1)^2 + \frac{(2k+1)^2}{k} (k+1) = \frac{k+1}{k},
\end{eqnarray*}
}
and
{\footnotesize
\begin{eqnarray*}
\mathbf{1}^t D(C_{2k+1})^{-1}\mathbf{1} &=& -2 \mathbf{1}^t\mathbf{1} - \mathbf{1}^t C^k \mathbf{1} - \mathbf{1}^t C^{k+1} \mathbf{1} + \frac{2k+1}{k(k+1)} 
                                             \mathbf{1}^t J \mathbf{1} \\
                                        &=& -4 (2k+1) + \frac{2k+1}{k(k+1)} (2k+1)^2\\
                                        &=& \frac{2k+1}{k (k+1)}
\end{eqnarray*}
}
Thus, by \eqref{eq vt C(2k+1)^{-1} v}, we deduce that
{\footnotesize
$$ 
w^t M_1^t M_2 M_1 w  =  2 + \frac{\frac{k+1}{k} \ \frac{2k+1}{k (k+1)} - \Big(\frac{k+1}{k} - 1\Big)^2}{\frac{k+1}{k}}= \frac{2k+4}{k+1}.
$$}
Finally, we obtain 
\begin{equation}\label{eq det F(1,2,2k) final}
\det D(\theta^{\prime}(1,2,2k)) = (2k+4)(k+1) = -n(n+2m) (-2)^{m-2},
\end{equation}
with $n=p+q$ and $m=1$, {where $p=2$ and $q=2s$}.

\textbf{Case 2:} 

Let   $\widehat{H}=\theta^{\prime}(1,2s,2k)$ and $\widehat{G}=\theta^{\prime}(1,2(s-1),2(k+1),1)$ be the graphs with its vertices labeled as in Figure~\ref{Figura F(1,2s,2k,1)} and Figure~\ref{Figura 
F(1,2(s-1),2(k+1),1)}, respectively, {for some $k\geq 2$ and $s\geq 2$}.

\begin{figure}[H]
  \centering
  \begin{tikzpicture}[scale=1]
      
      \fill (-3,0) circle(2pt) node[left] {$v_{1}$};
      \draw[-, thick] (-3,0)--(-2,-0.5); 
      \draw[-, thick] (-3,0)--(-2,0.5); 
      
      \fill (-2,0.5) circle(2pt) node[above] {$v_{2}$}; 
      \draw[-, thick] (-2,0.5)--(-1.5,0.5);
      \draw[-, dotted, thick] (-1.5,0.5)--(-0.5,0.5);
      \draw[-, thick] (-0.5,0.5)--(0,0.5);
      
      \draw[-, thick] (0,-0.5)--(0,0.5); 
      
      \fill (0,0.5) circle(2pt) node[above] {$v_{s+1}$}; 
      \draw[-, thick] (0,0.5)--(0.5,0.5);
      \draw[-, dotted, thick] (0.5,0.5)--(1.5,0.5);
      \draw[-, thick] (1.5,0.5)--(2,0.5);

      \fill (2,0.5) circle(2pt) node[above] {$v_{s+k}$};

      \fill (3,0) circle(2pt) node[right] {$v_{s+k+1}$};
      \draw[-, thick] (3,0)--(2,-0.5); 
      \draw[-, thick] (3,0)--(2,0.5); 
           
      \fill (2,-0.5) circle(2pt) node[below] {$v_{s+k+2}$};
      \draw[-, thick] (2,-0.5)--(1.5,-0.5);
      \draw[-, dotted, thick] (1.5,-0.5)--(0.5,-0.5);
      \draw[-, thick] (0.5,-0.5)--(0,-0.5);

      \fill (0,-0.5) circle(2pt) node[below] {$v_{s+2k+1}$}; 
      
      \fill (-2,-0.5) circle(2pt) node[below] {$v_{2k+2s}$}; 
      \draw[-, thick] (-2,-0.5)--(-1.5,-0.5);
      \draw[-, dotted, thick] (-1.5,-0.5)--(-0.5,-0.5);
      \draw[-, thick] (-0.5,-0.5)--(0,-0.5);
      
      \fill (-3,-1) circle(2pt) node[below] {$v_{2s+2k+1}$}; 
      \draw[-, thick] (-3,0)--(-3,-1);
      
   \end{tikzpicture}
   \caption{$\theta^{\prime}(1,2s,2k)$} \label{Figura F(1,2s,2k,1)}  
\end{figure}

\begin{figure}[H]
  \centering
  \begin{tikzpicture}[scale=1]
      
      \fill (-3,0) circle(2pt) node[left] {$v_{1}$};
      \draw[-, thick] (-3,0)--(-2,-0.5); 
      \draw[-, thick] (-3,0)--(-2,0.5); 
      
      \fill (-2,0.5) circle(2pt) node[above] {$v_{2}$}; 
      \draw[-, thick] (-2,0.5)--(-1.5,0.5);
      \draw[-, dotted, thick] (-1.5,0.5)--(-0.5,0.5);
      \draw[-, thick] (-0.5,0.5)--(0,0.5);
      
      \draw[-, thick] (0,-0.5)--(0,0.5); 
      
      \fill (0,0.5) circle(2pt) node[above] {$v_{s}$}; 
      \draw[-, thick] (0,0.5)--(0.5,0.5);
      \draw[-, dotted, thick] (0.5,0.5)--(1.5,0.5);
      \draw[-, thick] (1.5,0.5)--(2,0.5);

      \fill (2,0.5) circle(2pt) node[above] {$v_{s+k}$};

      \fill (3,0) circle(2pt) node[right] {$v_{s+k+1}$};
      \draw[-, thick] (3,0)--(2,-0.5); 
      \draw[-, thick] (3,0)--(2,0.5); 
           
      \fill (2,-0.5) circle(2pt) node[below] {$v_{s+k+2}$};
      \draw[-, thick] (2,-0.5)--(1.5,-0.5);
      \draw[-, dotted, thick] (1.5,-0.5)--(0.5,-0.5);
      \draw[-, thick] (0.5,-0.5)--(0,-0.5);

      \fill (0,-0.5) circle(2pt) node[below] {$v_{s+2k+2}$}; 
      
      \fill (-2,-0.5) circle(2pt) node[below] {$v_{2k+2s}$}; 
      \draw[-, thick] (-2,-0.5)--(-1.5,-0.5);
      \draw[-, dotted, thick] (-1.5,-0.5)--(-0.5,-0.5);
      \draw[-, thick] (-0.5,-0.5)--(0,-0.5);
      
      \fill (-3,-1) circle(2pt) node[below] {$v_{2s+2k+1}$}; 
      \draw[-, thick] (-3,0)--(-3,-1);
      
   \end{tikzpicture}
   \caption{$\theta^{\prime}(1,2(s-1),2(k+1))$} \label{Figura F(1,2(s-1),2(k+1),1)}  
\end{figure}

The distance matrices of $\widehat G$ and $\widehat H$ are
$$
D(\widehat G) = 
\left(
\begin{array}{ccc}
    P        &     A^t            & v + \mathbf{1} \\
      A            &   P          & w_1 + \mathbf{1}\\
v^t + \mathbf{1}^t & w_1^t + \mathbf{1}^t & 0
\end{array}
\right),
$$
$$
D(\widehat H) = 
\left(
\begin{array}{ccc}
P & B^t  & v + \mathbf{1} \\
B & P & w_2 + \mathbf{1}\\
v^t + \mathbf{1}^t & w_2^t + \mathbf{1}^t & 0
\end{array}
\right),
$$
where $P$, $A$ and $B$ are the matrices defined in  \ref{fla1}, \ref{fla2} and \ref{fla3}, respectively, $v$ is the first column of $P$, $w_1$ is the first column of $A$ and 
$w_2$ is the first column of $B$.  

We claim that 
\begin{equation} \label{eq D(hatG) = N D(hatH) Nt} 
D(\widehat G) = 
\left(
\begin{array}{cc}
    N       &     0           \\
    0  &   1         
\end{array}
\right)
D(\widehat H)
\left(
\begin{array}{cc}
    N^t       &     0           \\
    0  &   1         
\end{array}
\right),
\end{equation}
where
$$
N = 
\left(
\begin{array}{cc}
    I        &     0           \\
    A P^{-1} - M B P^{-1}  &   M         
\end{array}
\right).
$$

Indeed, by \eqref{eq D(G) = N D(H) Nt}, we have
$$
\left(
\begin{array}{cc}
    P        &     A^t    \\
    A        &   P 
\end{array}
\right) = N
\left(
\begin{array}{ccc}
P & B^t \\
B & P 
\end{array}
\right)
N^t.
$$
Hence, it is sufficient to prove that
$$
N
\left(
\begin{array}{c}
 v + \mathbf{1} \\
 w_2 + \mathbf{1}
\end{array}
\right)
= 
\left(
\begin{array}{c}
 v + \mathbf{1} \\
 w_1 + \mathbf{1}
\end{array}
\right).
$$
It is easy to check that 
$$
N
\left(
\begin{array}{c}
 v + \mathbf{1} \\
 w_2 + \mathbf{1}
\end{array}
\right)
= 
\left(
\begin{array}{c}
 v + \mathbf{1} \\
 (A - M B) P^{-1} (v + \mathbf{1}) + M(w_2 + \mathbf{1})
\end{array}
\right).
$$
Since $v$ is the first column of $P$, we obtain 
\begin{eqnarray*}
 (A - M B) P^{-1} v + Mw_2 & = & (A - M B) e_1 + Mw_2\\
                          & = & w_1 - Mw_2 + Mw_2  =  w_1.
\end{eqnarray*}
From the proof of Theorem \ref{theta}, Case 2, we have that
$$
(A - M B)P^{-1} =  e_2 e_1^t + \sum_{i=3}^{k+s} e_{i} e_{s}^t - \sum_{i=2}^{k+s} e_{i} e_{s+1}^t,
$$
combining this with the definition of $M$, we see that
$$
 (A - M B) P^{-1} \mathbf{1} +  M \mathbf{1} =  0 + \mathbf{1} = \mathbf{1}.
$$
This {completes} the proof of \eqref{eq D(hatG) = N D(hatH) Nt}. 
Futhermore, since 
$$
\det\left(
\begin{array}{cc}
    N       &     0           \\
    0  &   1         
\end{array}
\right)\cdot
\det
\left(
\begin{array}{cc}
    N^t       &     0           \\
    0  &   1         
\end{array}
\right)
= 1, 
$$
we deduce that 
$$
\det D(\widehat G) =  \det D(\widehat H).
$$
Combining this with \eqref{eq det F(1,2,2k) final}, using an inductive argument, we have 
$$
\det {D}(\theta^{\prime}(1,2s,2k)) = -n(n+2m)(-2)^{m-2},
$$
with $n=p+q$ y $m=1$, where $p=2s$ and $q=2k$.
\end{proof}

We now consider the case when a path is attached to a vertex of $\theta (l,p,q).$ We denote by $\theta^{\prime}_{m}(l,p,q)$ the graph obtained from $\theta(l,p,q)$  by identifying one vertex of degree  three of $\theta(l,p,q)$  with one vertex of degree one of a path of length $m \geq 0.$

The next proposition investigates the determinant of these graphs, when $p$ and $q$ are even.
 
\begin{prop} If $p$ and $q$ are even integers, then
$$\det D(\theta^{\prime}_{m}(1,p,q)) = -n (n + 2m) (-2)^{m-2},$$ 
where $n = p+q$ and $m \geq 0$.

\end{prop}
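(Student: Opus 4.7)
The plan is to apply the Graham--Hoffman--Hosoya formula (Theorem~\ref{2}) directly, since the blocks of $\theta^{\prime}_m(1,p,q)$ consist of one copy of $\theta(1,p,q)$ together with the $m$ edges of the attached path. By that theorem, $\det D(\theta^{\prime}_m(1,p,q))$ depends only on the determinants and cofactor sums of these blocks, so the attachment vertex is irrelevant.

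First I would evaluate $\cof D(\theta(1,p,q))$ when $p$ and $q$ are even. Using the identity stated just before Theorem~\ref{theta prime},
\[
\cof D(\theta(1,p,q)) = -2\det D(\theta(1,p,q)) - \det D(\theta^{\prime}(1,p,q)),
\]
and substituting $\det D(\theta(1,p,q)) = -n^{2}/4$ from Theorem~\ref{theta}(a) together with $\det D(\theta^{\prime}(1,p,q)) = ((1+n)^{2}-1)/2 = n(n+2)/2$ from Theorem~\ref{theta prime}(a), a single line of cancellation yields $\cof D(\theta(1,p,q)) = -n$.

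Next, recalling that every edge $e$ satisfies $\det D(e) = -1$ and $\cof D(e) = -2$, Theorem~\ref{2} applied to the $m+1$ blocks of $\theta^{\prime}_m(1,p,q)$ gives
\[
\det D(\theta^{\prime}_m(1,p,q)) = \det D(\theta(1,p,q))\,(-2)^{m} + m\cdot(-1)\cdot\cof D(\theta(1,p,q))\cdot(-2)^{m-1},
\]
because the $m$ edge-blocks contribute identically to the sum. Plugging in the values above and factoring out $(-2)^{m-2}$ produces
\[
-n^{2}(-2)^{m-2} - 2mn(-2)^{m-2} = -n(n+2m)(-2)^{m-2},
\]
which is exactly the claimed identity; it correctly specializes to $-n^{2}/4$ when $m=0$ and to $n(n+2)/2$ when $m=1$.

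The only step that requires any real thought is the cofactor computation for $\theta(1,p,q)$, and that is short thanks to Theorems~\ref{theta} and~\ref{theta prime}. No induction on $m$ or further matrix manipulation is needed, since the bulk of the work has already been carried out in those two theorems.
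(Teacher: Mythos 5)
Your proposal is correct, but it takes a genuinely different route from the paper. The paper proves this proposition by establishing (arguing as in Gong--Zhang--Xu) the three-term recurrence $\det D(G_m) = -4\det D(G_{m-1}) - 4\det D(G_{m-2})$ for $m\ge 2$ and then inducting from the base cases $m=0$ and $m=1$ supplied by Theorem~\ref{theta}(a) and Theorem~\ref{theta prime}(a). You instead apply Theorem~\ref{2} directly to the block decomposition of $\theta^{\prime}_m(1,p,q)$ into one $\theta(1,p,q)$-block and $m$ edge blocks, after extracting $\cof D(\theta(1,p,q)) = -n$ from the identity $\cof D(\theta) = -2\det D(\theta) - \det D(\theta^{\prime})$ together with the two theorems; the arithmetic $(-n^2/4)(-2)^m + m(-1)(-n)(-2)^{m-1} = -n(n+2m)(-2)^{m-2}$ checks out, as does your cofactor value (it agrees with the corollary the paper states afterwards), and there is no circularity since everything you invoke precedes the proposition. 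Your argument is shorter and self-contained: it avoids importing a recurrence from an external source and makes transparent both why the attachment vertex is irrelevant and why the answer depends only on $n$ and $m$. What the paper's recurrence approach buys is consistency with the technique of Gong--Zhang--Xu for appending paths, but as a proof of this particular statement your block-decomposition argument is arguably the more natural one given that Theorem~\ref{2} is already a central tool of the paper.
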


\begin{proof}

Let  $G_{m}=\theta^{\prime}_{m}(1,p,q),$ 
$V(G_m)= \{1, \dots p+q, \dots, 
p+q+m\}$ such that the vertices $\{1, \dots, p+q\}$ induce $\theta(1,2s,2k)$ and the vertices $\{p+q, \dots, p+q+m\}$ induce $P_{m+1}$, where $p=2s$, $q=2k$ and 
$m \ge 0$ for some $k \ge 1$ and $s \ge {1}$. Arguing as in \cite[Theorem 3.2]{GoZhXu2013}, we obtain
$$
\det D(G_m) = -4 \det D(G_{m-1})  - 4\det D(G_{m-2}),
$$
for $m \ge 2$. Combining this identity with the results of Theorem \ref{theta}, case $(a)$ and Theorem \ref{theta prime}, case $(a)$,  we deduce that
 $$\det D\big(G_m\big) = -n (n + 2m) (-2)^{m-2},$$ 
where $n = p+q$ and $m \ge 0$.
\end{proof}

 As we already know  the determinant of a $\theta$-graph and $\theta^{\prime}$-graph, we  obtain the {values of $\cof D(G)$}, for $G= \theta({l,p,q}).$

\begin{cor}
The following assertions hold:
\begin{itemize}
\item If $G = \theta(1, p, q)$ for some even integers $p$ and $q,$ then $\cof D(G) = -(p + q).$
\item If $G = \theta(2, 2, 2),$ then
$\cof D(G) = -16.$
\item If $G = \theta(2, 2, q)$ for some odd integer $q > 1,$ then
$\cof D(G) = 4q - 8.$
\item Otherwise, $\cof D(G) = 0.$
\end{itemize}
\end{cor}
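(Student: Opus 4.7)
The plan is to apply the identity stated immediately before Theorem \ref{theta prime},
\[
\cof D(\theta(l,p,q)) \;=\; -2\,\det D(\theta(l,p,q)) \;-\; \det D(\theta'(l,p,q)),
\]
which in turn is a direct consequence of Theorem \ref{2}: the blocks of $\theta'(l,p,q)$ are $\theta(l,p,q)$ together with a single edge $K_2$, and for the edge block one has the elementary values $\det D(K_2)=-1$ and $\cof D(K_2)=-2$. Solving the Graham--Hoffman--Hosoya relation for $\det D(\theta'(l,p,q))$ for $\cof D(\theta(l,p,q))$ yields exactly the displayed identity. Once this identity is in hand, each of the four items reduces to substitution of the values of $\det D(\theta(l,p,q))$ and $\det D(\theta'(l,p,q))$ recorded in Theorems \ref{theta} and \ref{theta prime}, followed by a routine algebraic simplification.

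In order to verify item (a), I would set $n=p+q$ with $p,q$ even and combine $\det D(\theta(1,p,q))=-n^{2}/4$ from Theorem~\ref{theta}(a) with $\det D(\theta'(1,p,q))=\bigl((n+1)^{2}-1\bigr)/2$ from Theorem~\ref{theta prime}(a); the identity gives
\[
\cof D(\theta(1,p,q)) \;=\; \tfrac{n^{2}}{2} - \tfrac{(n+1)^{2}-1}{2} \;=\; \tfrac{-2n}{2} \;=\; -(p+q).
\]
For item (c), substituting $\det D(\theta(2,2,q))=q^{2}-5$ and $\det D(\theta'(2,2,q))=-2(q^{2}+2q-9)$ into the identity collapses to $-2(q^{2}-5)+2(q^{2}+2q-9)=4q-8$, matching the stated value. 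Item (b) is an immediate numerical check using the constant values supplied by Theorems \ref{theta}(b) and \ref{theta prime}(b) for $\theta(2,2,2)$. Finally, item (d) falls out at once: both $\det D(\theta(l,p,q))$ and $\det D(\theta'(l,p,q))$ vanish by the ``Otherwise'' clauses of those theorems, so the identity forces $\cof D(G)=0$.

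There is no real obstacle here beyond the algebraic bookkeeping in case (a); every ingredient has already been established, and the corollary's function is simply to harvest the cofactor information from the preceding determinant computations through the Graham--Hoffman--Hosoya block decomposition.
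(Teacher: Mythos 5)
Your overall strategy is exactly the one the paper intends: the identity
$\cof D(\theta(l,p,q)) = -2\det D(\theta(l,p,q)) - \det D(\theta'(l,p,q))$
is derived from Theorem~\ref{2} applied to the block decomposition of $\theta'(l,p,q)$ into $\theta(l,p,q)$ and an edge (using $\det D(K_2)=-1$, $\cof D(K_2)=-2$), and the corollary is then harvested by substitution. Your computations for items (a), (c) and (d) are correct and match what the paper's stated values yield.

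However, your claim that item (b) is ``an immediate numerical check'' is where the argument breaks. Carrying out the substitution with the values actually stated in the paper, namely $\det D(\theta(2,2,2))=-16$ and $\det D(\theta'(2,2,2))=-16$, the identity gives $-2(-16)-(-16)=48$, not the claimed $-16$. The corollary's value $-16$ is in fact correct: $\theta(2,2,2)=K_{2,3}$ and one checks directly (e.g.\ via $\det(D+tJ)=\det D + t\cof D$, which here equals $-16-16t$) that $\cof D(K_{2,3})=-16$. But then the identity forces $\det D(\theta'(2,2,2))=-2(-16)-(-16)$ read the other way, i.e.\ $\det D(\theta'(2,2,2))=32-(-16)\cdot(-1)=48$, which is also what Theorem~\ref{2} gives directly for $K_{2,3}$ plus a pendant edge; so the value $-16$ recorded in Theorem~\ref{theta prime}(b) cannot be right. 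In short: for case (b) your method, applied to the ingredients you cite, produces $48$ and therefore does not prove the stated assertion. You need either to flag and correct the value of $\det D(\theta'(2,2,2))$ before invoking the identity, or to verify $\cof D(\theta(2,2,2))=-16$ by an independent direct computation as above.
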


\begin{rem} A graph is said to be \emph{at most bicyclic} if it arises from a tree by the addition of at most two edges. The blocks that are at most bicyclic graphs having at least two vertices are: edge blocks,  cycles, and  $\theta$-graphs. The values of $\det D(G)$ and $\cof D(G)$ where already known in the first two cases. Now, we have {obtained} the values of $\det D(G)$ and $\cof D(G)$ for the last case.  
\end{rem}

From the results above, by applying Theorem \ref{2}, we present in the sequence a formula for $\det D(G)$ for all graphs having at most bicylic blocks. Notice that this class generalizes the class of cacti (which are graphs having at most unicyclic blocks).

\begin{teo}Let G be a connected graph having at most bicyclic blocks. If $G = K_1$ or any block of $G$ is {an even} cycle or a $\theta({l,p,q})$  with $\det D(\theta({l,p,q})) = 0,$
then $\det D(G) = \cof D(G) = 0.$ Otherwise, if the blocks of $G$ are:
\begin{itemize}
\item $m$ edge blocks,
\item $c$ odd cycles of lengths $l_1 , l_2 ,\hdots , l_c,$
\item $r$ graphs $\theta(1, p_1, q_1), \theta(1, p_2 , q_2 ), \hdots, \theta(1, p_r , q_r )$ for even integers $p_1, q_1, \hdots, p_r, q_r,$
\item $s$ graphs $\theta(2, 2, 2),$ and
\item $t$ graphs $\theta(2, 2, q_1 ), \theta(2, 2, q_2 ), \hdots , \theta(2, 2, q_t )$ for odd
integers $q_1, q_2, \hdots, q_t>1$,
\end{itemize}
then
$$\det D(G)=\left( \frac{m}{2} +\sum\limits_{h=1}^c\frac{l_h^2-1}{4l_h}+\sum_{i=1}^r\frac{p_i+q_i}{4}+s+\sum\limits_{j=1}^t\frac{q_j^2-5}{4q_j-8} \right) \cof D(G),$$
where
$$\cof D(G)=(-2)^m(-1)^r(-16)^s\left(\prod\limits_{h=1}^cl_h\right)\left(\prod\limits_{i=1}^r(p_i+q_i)\right)\prod\limits_{j=1}^t(4q_j-8).$$

\end{teo}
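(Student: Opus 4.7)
The plan is to apply Theorem~\ref{2} (the Graham--Hoffman--Hosoya block decomposition) together with the per-block data for $\det D$ and $\cof D$ assembled in the paper up to this point. For every block type admissible in an at-most-bicyclic graph both invariants are now in closed form: for an edge $K_2$, $\det D=-1$ and $\cof D=-2$; for an odd cycle, Lemma~\ref{1} gives $\det D(C_l)=(l^2-1)/4$ and $\cof D(C_l)=l$; for $\theta(1,p,q)$ with $p,q$ even, Theorem~\ref{theta} and the preceding corollary give $\det D=-(p+q)^2/4$ and $\cof D=-(p+q)$; for $\theta(2,2,2)$ both invariants equal $-16$; and for $\theta(2,2,q)$ with $q>1$ odd, $\det D=q^2-5$ and $\cof D=4q-8$. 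Every remaining admissible block (an even cycle or a $\theta$-graph falling in the ``otherwise'' branch of Theorem~\ref{theta}) satisfies $\det D=\cof D=0$.

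First I would dispose of the degenerate case. Suppose $G$ has a block $G_{i_0}$ with $\cof D(G_{i_0})=0$; by the inventory above, this forces $\det D(G_{i_0})=0$ as well. Then $\cof D(G)=\prod_i\cof D(G_i)=0$, while in the sum $\det D(G)=\sum_i\det D(G_i)\prod_{j\neq i}\cof D(G_j)$ the summand with index $i_0$ vanishes because $\det D(G_{i_0})=0$, and every other summand contains the factor $\cof D(G_{i_0})=0$. Hence $\det D(G)=0$, which together with the trivial case $G=K_1$ settles the first half of the statement.

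In the non-degenerate case every $\cof D(G_i)$ is nonzero, so the identity of Theorem~\ref{2} rewrites as
$$\det D(G)=\cof D(G)\sum_{i=1}^k\frac{\det D(G_i)}{\cof D(G_i)}.$$
Substituting the per-block ratios---namely $1/2$ per edge, $(l_h^2-1)/(4l_h)$ per odd cycle $C_{l_h}$, $(p_i+q_i)/4$ per $\theta(1,p_i,q_i)$, $1$ per $\theta(2,2,2)$, and $(q_j^2-5)/(4q_j-8)$ per $\theta(2,2,q_j)$---reproduces exactly the bracketed factor in the statement. Finally, $\cof D(G)$ is obtained by multiplying the cofactors of all blocks, and pulling the $r$ negative signs out of the factors $-(p_i+q_i)$ coming from the $\theta(1,p_i,q_i)$ blocks produces the announced $(-1)^r$.

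The main obstacle here is essentially bookkeeping rather than mathematical content, since all the hard work was done in Theorems~\ref{theta} and~\ref{theta prime} and the preceding corollary. One must be careful with sign accounting, in particular with the $(-1)^r$ contributed by the cofactors $-(p_i+q_i)$ and with the $(-16)^s$ factor---for the $\theta(2,2,2)$ blocks $\det D$ and $\cof D$ coincide, so each such block contributes exactly $1$ to the bracketed sum rather than a nontrivial fraction, and this is the reason the summand ``$s$'' appears bare in the formula.
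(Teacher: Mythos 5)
Your proposal is correct and follows exactly the route the paper intends (the paper states this theorem as an immediate consequence of Theorem~\ref{2} combined with the per-block values of $\det D$ and $\cof D$ established earlier, without writing out the bookkeeping). Your per-block ratios, the factorization $\det D(G)=\cof D(G)\sum_i \det D(G_i)/\cof D(G_i)$, and the handling of the degenerate case where some block has $\det D=\cof D=0$ all match what the paper's argument requires.
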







\end{document}